\theoremstyle{plain}
\newtheorem{theorem}{Theorem}
\newtheorem*{thm}{Theorem}
\newtheorem{lemma}[theorem]{Lemma}
\theoremstyle{definition}
\newtheorem{definition}{Definition}
\newtheorem*{rem}{Remark}
\newcommand{\di}{\operatorname{div}}
\definecolor{myblue}{cmyk}{1,0.7,0,0}
\definecolor{mylightblue}{cmyk}{0.5,0.3,0,0}
\definecolor{mypaleblue}{cmyk}{0.08,0.05,0,0}
\definecolor{mypurple}{cmyk}{0.6,1,0,0}
\definecolor{myorange}{cmyk}{0,0.83,1,0.5}
\definecolor{mygreen}{cmyk}{0.9,0,1,0.2}
\definecolor{myred}{cmyk}{0,1,1,0.1}
\definecolor{mycolor}{cmyk}{0,0.5,1,0.5}
\definecolor{mymint}{cmyk}{0.93,0,0.75,0}
\newcommand{\E}{\mathbb E}
\newcommand{\R}{\mathbb R}
\renewcommand{\P}{\mathbb P}
\title[]{A Local Faber-Krahn inequality and Applications\\ to Schr\"odinger Equations}
\author[]{Janna Lierl}
\address{Department of Mathematics, University of Connecticut, 341 Mansfield Road, Storrs, CT 06268}
\email{janna.lierl@uconn.edu}
\author[]{Stefan Steinerberger} 
\address{Department of Mathematics, Yale University, 10 Hillhouse Avenue, New Haven, CT 06511}
\email{stefan.steinerberger@yale.edu}
\begin{document}
\begin{abstract}
We prove a local Faber-Krahn inequality for solutions $u$ to the Dirichlet problem for $\Delta + V$ on an arbitrary domain $\Omega$ in $\R^n$.  Suppose a solution $u$ assumes a global maximum at some point $x_0 \in \Omega$ and $u(x_0)>0$. Let $T(x_0)$ be the smallest time at which a Brownian motion, started at $x_0$, has exited the domain $\Omega$ with probability $\ge 1/2$. For nice (e.g., convex) domains, $T(x_0) \asymp d(x_0,\partial\Omega)^2$ but we make no assumption on the geometry of the domain. Our main result is that there exists a ball $B$ of radius $\asymp T(x_0)^{1/2}$ such that 
$$ \| V \|_{L^{\frac{n}{2}, 1}(\Omega \cap B)} \ge c_n > 0, $$
provided that $n \ge 3$. In the case $n = 2$, the above estimate fails and we obtain a substitute result. The Laplacian may be replaced by a uniformly elliptic operator in divergence form.
This result both unifies and strenghtens a series of earlier results.
\end{abstract}

\keywords{Faber-Krahn inequality, Schr\"odinger operator, Feynman-Kac formula.}
\subjclass[2010]{35J10, 46E35 (primary), 47D08 (secondary).} 

\maketitle

\section{Introduction and Main Result}

\subsection{Faber-Krahn inequalities.} One of the earliest problems in spectral geometry is the relation between the lowest Dirichlet eigenvalue of the Laplacian and the size of the domain. More precisely, 
assume that a domain $\Omega \subset \mathbb{R}^n$ is given and that we have a nontrivial solution $u$ of the Dirichlet problem
\begin{align*} \begin{split}
\Delta u + \lambda u &= 0 \quad \mbox{ in } \Omega,\\
u|_{\partial \Omega} &= 0.
\end{split}
\end{align*}
By a solution we always mean a solution $u \in W^{1,2}_0(\Omega)$ in the distributional sense.  A solution is trivial if $u =0$ a.e., and nontrivial otherwise. 
 The smallest $\lambda$ for which a nontrivial solution $u$ exists, sometimes denoted $\lambda_1(\Omega)$, can be interpreted as the base
frequency of a vibrating membrane in the shape of $\Omega$. The Faber-Krahn inequality states that
$$ \lambda_1(\Omega) \geq \frac{\pi j_{n/2-1,1}^2}{\Gamma(n/2+1)^{2/n}} |\Omega|^{-2/n},$$
with equality if and only if $\Omega$ is a ball, where $\Gamma$ denotes the Gamma function and $j_{n/2-1,1}$ denotes the smallest positive zero of a Bessel function.
 The Faber-Krahn inequality is a  fundamental theorem in spectral geometry and one of the first results that relates the geometry of a domain $\Omega$ to the solvability of  a differential equation.
 
More generally, a similar inequality holds for the Dirichlet problem with potential $V$,
\begin{align} \begin{split} \label{eq:equation}
 \Delta u + Vu &= 0 \quad \mbox{ in } \Omega,\\
 u|_{\partial \Omega} &= 0.
\end{split}
\end{align}
 De Carli \& Hudson \cite{decarli} proved that a the existence of a nontrivial solution $u \in C(\overline{\Omega})$ of \eqref{eq:equation} implies
$$ \|V\|_{L^{\infty}(\Omega)} \geq \frac{\pi j_{n/2-1,1}^2}{\Gamma(n/2+1)^{2/n}} |\Omega|^{-2/n},$$
and equality is attained if $\Omega$ a metric ball and $V \equiv \lambda_1(\Omega)$ is constant. A slight refinement is given by a not very well known result of Barta \cite{barta} which implies that
$$ \|V\|_{L^{\infty}(\Omega)} \geq \lambda_1(\Omega).$$
This result was later put in a more general context by De Carli, Edward, Hudson \& Leckband \cite{decarli2}. A
sample result (see \cite[Theorem 1.2]{decarli2}) is the following: if $n \ge 3$ and $r >  n/2$, then the existence of a nontrivial solution to \eqref{eq:equation} implies that there is a constant $c_n \in (0,\infty)$ such that
$$ |\Omega|^{\frac{2}{n}-\frac{1}{r}} \| V^+ \|_{L^r(\Omega)} \geq  c_n > 0.$$
The paper \cite{decarli2} also discusses the endpoint case $r=n/2$ and establishes that the result is not valid for $n=2$; this is related to the Sobolev embedding failing for $n=2$. At the core of the argument is an elegant combination of the H\"older inequality, the Sobolev inequality, Green's identity, equation \eqref{eq:equation} and H\"older's inequality again and bears repeating \cite[(1.6)]{decarli2}. We denote the H\"older conjugate of $r >  n/2$ by $q < n/(n-2)$. Then
\begin{align*}
\| u\|_{L^{2q}(\Omega)}^2 &\leq c_n |\Omega|^{\frac{2}{n} - \frac{1}{r}} \int_{\Omega}{|\nabla u|^2 dx} = -c_n  |\Omega|^{\frac{2}{n} - \frac{1}{r}} \int_{\Omega}{ u \Delta u~dx} = c_n  |\Omega|^{\frac{2}{n} - \frac{1}{r}} \int_{\Omega}{ u^2 V dx} \\
&\leq c_n  |\Omega|^{\frac{2}{n} - \frac{1}{r}} \int_{\Omega}{ u^2 V^+ dx}\leq c_n \|u\|_{L^{2q}(\Omega)}^2   |\Omega|^{\frac{2}{n} - \frac{1}{r}}  \| V^+\|_{L^r(\Omega)},
\end{align*}
and from this the assertion follows after cancellation (a similar argument was already used in \cite{decarli3}). These type of inequalities fit naturally into a larger family of results that relate properties of
an elliptic equation to an $L^p-$norm ($p$ often related to the dimension of the space), we refer to the Cwikel-Lieb-Rozenblum inequality \cite{cwikel, liebi, rozenblum}, the Alexandrov-Bakelman-Pucci estimate and various Carleman-type estimates appearing in unique continuation (see Jerison \& Kenig \cite{jer} and the example in Wolff \cite{wolff}).

\subsection{Some motivation.} 
Our interest in these problems was motivated by the following heuristic. Let $\Omega \subset \R^2$ be an elongated domain as shown in Fig. \ref{fig:snake} and let $u$ be a nontrivial solution of \eqref{eq:equation}. The results of Faber-Krahn type discussed above show that $\| V \|_{L^r(\Omega)}$ (for $r>1$)
cannot be arbitrarily small and is bounded below by $\|  V \|_{L^r(\Omega)} \gtrsim_{r}|\Omega|^{1-1/r}$. Clearly, this lower bound on $\| V \|_{L^r(\Omega)}$ decays as $|\Omega|$ increases. However, basic intuition and related results (e.g., \cite{biswas, georg, hayman, manas}) suggest that $\| V \|_{L^r(\Omega)}$ should not decay substantially unless the inradius (the radius of the largest ball fully contained in the domain) increases.

\begin{center}
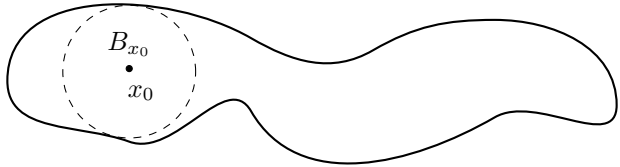
\begin{figure}[h!]
\begin{tikzpicture}[xscale=0.81, yscale=0.81]
\draw [thick] (0,-1) to[out=340, in=120] (2,-0.5) to[out=300, in=210] (6,-0.6)  to[out=30, in=270] (8,-0.4)  to[out=90, in=0] (6,1)   to[out=180, in=30] (4,0.5)   to[out=210, in=330] (2,0.7)    to[out=150, in=90] (-2,0) 
 to[out=270, in=160] (0,-1);
\filldraw (0,0.2) circle (0.05cm);
\node at (0.2, -0.2) {$x_0$};
\draw [dashed] (0,0.15) circle (1.09cm);
\node at (0, 0.6) {$B_{x_0}$};
\end{tikzpicture}
\caption{Long narrow domain and its inradius}
\label{fig:snake}
\end{figure}
\end{center}

This can be seen from various points of view. The geometric perspective is that the solution is bound to have most of its oscillation in the direction orthogonal to the direction
of elongation, which localizes the problem -- in particular, the result should be fairly independent of the transversal direction. A more potential-theoretic perspective is that
the far-field should not act strongly in long narrow domains. Yet another perspective is  to cut the long narrow domain surgically into one that mainly contains the ball. We can then reintroduce Dirichlet boundary condition by only slightly modifying the potential $V$ and without much increase in $\| V \|_{L^r(\Omega)}$. Our main result confirms this intuition.

\subsection{Location of the maximum} 
A different question asks where a nontrivial solution of the Dirichlet problem \eqref{eq:equation} might assume its maximum.
A much simpler but nontrivial question is that of the inradius of the domain $\Omega$. This  question was raised in 1951 by P\'{o}lya \& Szeg\H{o} in their classic book \textit{Isoperimetric Inequalities in Mathematical Physics} \cite{pol}. They raised the question whether there is a constant $c>0$ such that for all simply connected domains $\Omega \subset \mathbb{R}^2$
$$ \mbox{inradius}(\Omega) \geq  c\cdot  \lambda_1(\Omega)^{-1/2}.$$
This inequality was first proven by Makai \cite{makai} in 1965 and, independently, by Hayman \cite{hayman} in 1977. 
No such inequality can hold in higher dimensions because lines cutting through the domain affect the inradius but do not have a strong impact on the lowest eigenvalue.
A celebrated result of Lieb \cite[Corollary 2]{lieb} gives a complete and satisfactory answer.
\begin{thm}[Lieb, 1983] Let $n \geq 3$ and $\Omega \subset \mathbb{R}^n$ be open and non-empty. For every $\varepsilon > 0$, there is a constant $c=c(\varepsilon,n) > 0$ such that
there exists a ball $B$ of radius $c \lambda_1(\Omega)^{-1/2}$ with
$$ |B \cap \Omega| \geq (1- \varepsilon) |B|.$$
\end{thm}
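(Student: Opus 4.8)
The plan is to argue by contraposition. Fix $\varepsilon>0$ and set $r:=c\,\lambda_1(\Omega)^{-1/2}$, where $c=c(n,\varepsilon)>0$ is to be chosen at the end; if $\lambda_1(\Omega)=0$ (e.g.\ $\Omega=\R^n$) the conclusion is trivial, so assume $\lambda_1(\Omega)\in(0,\infty)$. Suppose that \emph{every} ball $B$ of radius $r$ fails the assertion, i.e.\ $|B\cap\Omega|<(1-\varepsilon)|B|$, equivalently $|B\setminus\Omega|\ge\varepsilon|B|$. I will show this forces $\lambda_1(\Omega)\ge A(n,\varepsilon)\,r^{-2}$ for some $A(n,\varepsilon)>0$; inserting $r^{2}=c^{2}\lambda_1(\Omega)^{-1}$ then gives $1\ge A(n,\varepsilon)c^{-2}$, which is impossible once $c<\sqrt{A(n,\varepsilon)}$. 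Hence for such $c$ at least one ball $B$ of radius $r$ must satisfy $|B\cap\Omega|\ge(1-\varepsilon)|B|$.

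To obtain the lower bound on $\lambda_1(\Omega)$ I would use the variational characterization $\lambda_1(\Omega)=\inf\{\|\nabla u\|_{L^2}^2/\|u\|_{L^2}^2:u\in W^{1,2}_0(\Omega),\ u\not\equiv0\}$, recalling that any such $u$, extended by $0$, lies in $W^{1,2}(\R^n)$ and vanishes a.e.\ on $\R^n\setminus\Omega$. Tile $\R^n$ by a grid of closed cubes $\{Q_j\}$ of side length $2r$. Each $Q_j$ contains the ball of radius $r$ about its center, so $|Q_j\setminus\Omega|\ge\varepsilon|B_r|=\varepsilon\,\omega_n\,2^{-n}|Q_j|=:\varepsilon'|Q_j|$ (with $\omega_n$ the volume of the unit ball), and therefore $u$ vanishes a.e.\ on a subset of $Q_j$ of measure at least $\varepsilon'|Q_j|$. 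The crucial ingredient is a Poincar\'e-type inequality: there is $C=C(n,\varepsilon')$ such that for every cube $Q$ of side $\ell$ and every $v\in W^{1,2}(Q)$ which vanishes a.e.\ on a measurable set $E\subset Q$ with $|E|\ge\varepsilon'|Q|$,
$$\int_Q v^2\,dx\ \le\ C\,\ell^2\int_Q|\nabla v|^2\,dx.$$
Applying this on each $Q_j$ with $\ell=2r$ and summing over $j$ yields $\|u\|_{L^2(\R^n)}^2\le 4C\,r^2\|\nabla u\|_{L^2(\R^n)}^2$, hence $\lambda_1(\Omega)\ge(4C)^{-1}r^{-2}$, which is the desired bound with $A(n,\varepsilon)=(4C)^{-1}$.

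It remains to prove this local inequality, and this is the step I expect to be the main obstacle — and the one forcing $n\ge3$. By scaling it suffices to treat a unit cube $Q$; let $\bar v$ be the average of $v$ over $Q$. The Sobolev--Poincar\'e inequality on the cube gives $\|v-\bar v\|_{L^{2^*}(Q)}\le C_n\|\nabla v\|_{L^2(Q)}$ with $2^*=\tfrac{2n}{n-2}$; since $v\equiv0$ on $E$ we have $|\bar v|\,|E|^{1/2^*}=\|v-\bar v\|_{L^{2^*}(E)}\le\|v-\bar v\|_{L^{2^*}(Q)}\le C_n\|\nabla v\|_{L^2(Q)}$, whence $|\bar v|\le C_n(\varepsilon')^{-1/2^*}\|\nabla v\|_{L^2(Q)}$. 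Combining this with the ordinary Poincar\'e inequality $\|v-\bar v\|_{L^2(Q)}\le C_n\|\nabla v\|_{L^2(Q)}$ and the triangle inequality $\|v\|_{L^2(Q)}\le\|v-\bar v\|_{L^2(Q)}+|\bar v|$ gives $\|v\|_{L^2(Q)}\le C(n,\varepsilon')\|\nabla v\|_{L^2(Q)}$; squaring and rescaling then give the claim. The place where $n\ge3$ is indispensable is precisely this: one needs $2^*<\infty$ so that occupying a fixed fraction of the cube yields quantitative control of $|\bar v|$ by $\|\nabla v\|_{L^2}$; for $n=2$ one would instead need $L^\infty$ control of $v-\bar v$ by $\|\nabla v\|_{L^2}$, which fails (the same obstruction as the failure at the endpoint $r=n/2$, and at $n=2$, of the Faber--Krahn estimates recalled above).

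Finally, a variant better aligned with the Feynman--Kac viewpoint of the rest of the paper would replace the Poincar\'e-type lemma by a potential-theoretic estimate: for $n\ge3$ the isocapacitary inequality gives $\operatorname{cap}(B_r(x)\setminus\Omega)\gtrsim_n(\varepsilon|B_r|)^{(n-2)/n}\gtrsim_{n,\varepsilon}r^{n-2}\asymp\operatorname{cap}(B_{2r}(x))$ for every $x\in\Omega$, so Brownian motion from $x$ hits $\R^n\setminus\Omega$ before exiting $B_{2r}(x)$ with probability bounded below by some $\beta=\beta(n,\varepsilon)>0$; comparing with the exit time of $B_{2r}(x)$ one gets $\sup_{x\in\Omega}\mathbb{P}_x(T_\Omega>t_0)\le1-\beta/2$ for some $t_0\asymp r^2$, and iterating via the Markov property yields $\sup_{x\in\Omega}\mathbb{P}_x(T_\Omega>t)\le C e^{-\kappa t}$ with $\kappa\asymp r^{-2}$. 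Since $e^{-t\lambda_1(\Omega)}=\|e^{t\Delta_\Omega}\|_{L^2\to L^2}\le\|e^{t\Delta_\Omega}\|_{L^\infty\to L^\infty}=\sup_{x\in\Omega}\mathbb{P}_x(T_\Omega>t)$ for all $t>0$ (by interpolation and symmetry of the Dirichlet heat kernel), letting $t\to\infty$ again gives $\lambda_1(\Omega)\ge\kappa\gtrsim_{n,\varepsilon}r^{-2}$, and the contradiction argument closes as before.
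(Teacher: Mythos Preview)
Your argument is correct and complete for the stated range $n\ge 3$, but it is not the route the paper takes. The paper does not give a standalone proof of Lieb's theorem; it quotes it as a known result and then recovers (a form of) it as a corollary of Theorem~1: take $V\equiv\lambda_1(\Omega)$, apply the Feynman--Kac inequality \eqref{eq:fund ineq}, Khasminskii's lemma, and the heat-kernel bound (Lemma~\ref{lem:est}) to obtain $\lambda_1(\Omega)\,T(x_0)\gtrsim 1$, and then invoke the properties of the median exit time to conclude that a ball of radius $\sim\lambda_1(\Omega)^{-1/2}$ meets $\Omega$ in a large proportion. Your first argument is the classical variational/Poincar\'e route (tile by cubes, use a Poincar\'e inequality for functions vanishing on a set of fixed proportion), which is more elementary and entirely self-contained; your second sketch via capacity and exit-time decay is much closer in spirit to the paper's probabilistic machinery.

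One correction: your claim that $n\ge 3$ is ``indispensable'' for the local Poincar\'e lemma is not right. Your proof via the Sobolev--Poincar\'e exponent $2^*$ does need $n\ge 3$, but the lemma itself holds in every dimension by a simpler argument: on the unit cube, since $v=0$ on $E$ one has
\[
|\bar v|^2\,|E|=\int_E |v-\bar v|^2 \le \int_Q |v-\bar v|^2 \le C_n\|\nabla v\|_{L^2(Q)}^2,
\]
so $|\bar v|\le (C_n/\varepsilon')^{1/2}\|\nabla v\|_{L^2(Q)}$, and the rest of your chain goes through unchanged. Thus the hypothesis $n\ge 3$ in the stated theorem is not forced by this proof (and indeed Lieb's result holds for all $n$); the paper restricts to $n\ge 3$ only because that is the regime of Theorem~1 and of the $L^{n/2,1}$ framework, not because of an obstruction in Lieb's inequality itself.
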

Using an approach of Georgiev \& Mukherjee \cite{georg}, Rachh and the second author \cite{manas} showed that the result also holds if the eigenvalue $\lambda_1(\Omega)$ is replaced by a Schr\"odinger potential. In this case, a ball $B$ of radius 
$$ |B|^{1/n} \sim \|V\|_{L^{\infty}(\Omega)}^{-1/2},$$
centered at a point $x_0$ where the solution $u$ of \eqref{eq:equation} assumes its maximum, has a large intersection with $\Omega$. Biswas \cite{biswas} recently extended the result to fractional Schr\"odinger operators $-(-\Delta)^{\alpha/2} + V$. This line of reasoning was further pursued by Biswas \& L\H{o}rinczi \cite{bis2}.
Put differently, if the maximum is close to the boundary, then the potential has to be large. It is easy to see that this is sharp: consider
$$ u(x,y) = \sin{(n \pi x)} \sin{(m \pi y)} ~\mbox{on}~\Omega=[0,1]^2~\mbox{solving}~-\Delta u + (m^2 + n^2)\pi^2 u = 0,$$
which has global maxima and minima at points whose distance to the boundary is given by
$$ d(\arg \max u, \partial \Omega) \sim \frac{1}{\max(m,n)}  \gtrsim \frac{1}{\sqrt{m^2 + n^2}} = \|V\|_{L^{\infty}(\Omega)}^{-1/2}.$$
We observe that this implies a form of the generalized Faber-Krahn inequality given by De Carli \& Hudson \cite{decarli} (without the sharp constant), since
$$ |\Omega| \gtrsim \mbox{inrad}(\Omega)^n \gtrsim d(\arg \max u, \partial \Omega)^n \gtrsim  \|V\|_{L^{\infty}(\Omega)}^{-n/2},$$
where all the implicit constants depend only on the dimension. Here and henceforth, we use $A \lesssim_{} B$ to denote the existence of a universal constant
such that $A \leq cB$. Writing $A \lesssim_{c_1, c_2, \dots, c_n}  B$ denotes that the constant is allowed to depend on the variables in the subscript and
$A \sim B$ denotes that both $A \lesssim B$ and $B \lesssim A$ hold.

\section{Main results}
\subsection{ Setup.}  An informal summary of the types of results discussed above is the following:
\begin{enumerate}
\item If \eqref{eq:equation} has a solution on a small domain $\Omega$, then $\|V\|_{L^p(\Omega)}$ is large (for a certain allowed range of $p$ depending on the dimension and `large' in the sense that there
exists a lower bound depending on $|\Omega|$).
\item If a solution of \eqref{eq:equation} has a global maximum or minimum that is close to the boundary, then $\|V\|_{L^{\infty}(\Omega)}$ has to be large.
\end{enumerate}
We will prove a result that unifies both these results for general equations of the type
$$ \di(A \cdot \nabla u) + Vu = 0,$$
where we assume that $A=A(x)$ is measurable and there exist constants $0 < \lambda < \Lambda < \infty$ 
\begin{align*} \tag{uniform ellipticity} 
\lambda |\xi|^2 \leq \left\langle A \xi, \xi\right\rangle \leq \Lambda |\xi|^2, \quad \forall \xi \in \R^n.
\end{align*}

The uniform ellipticity constants $\lambda$ and $\Lambda$ impact all subsequent constants; we will suppress this dependence for clarity of exposition. 
For the diffusion process $(X_t)_{t \ge 0}$ generated by the uniformly elliptic operator $ \di(A \cdot \nabla u)$ on $\Omega$ with Dirichlet boundary condition, we let
\begin{align*}
\tau := \inf \{ t>0 : X_t \notin \Omega \} 
\end{align*}
be the first exit time of the domain $\Omega$ (or, alternatively, the first hitting time of the boundary if no boundary condition were imposed).

\begin{definition} \label{def:median exit time} 
We define the {\em median exit time} for the diffusion starting at point $x \in \Omega$ as
\begin{align*}
T_{\eta}(x) := \inf \left\{ t>0 : \P_x(\tau \le t) \ge \eta \right\}.
\end{align*}
\end{definition}
For the purpose of Theorem \ref{thm:1} and Theorem \ref{thm:2}, we make the arbitrary choice $\eta=1/2$ and then drop the subscript $\eta$. However, the implicit constants in our theorems depend on the choice of $\eta$.
\begin{rem} \label{rem:median exit time} Several remarks are in order.
\begin{enumerate} 
\item A similar quantity has already been used in \cite{cheng, jianfeng, manas, stein} under the name of `diffusion distance', in a setting of finite graphs.
\item There exists a constant $0 < c < 1$ depending only on $\lambda, \Lambda$ and the dimension such that  $|B(x_0, r) \cap \Omega| \leq c|B(x_0, r)|$ implies $T(x_0) \leq r^2$.
\item There exists a constant $c> 0$ depending only on $\lambda, \Lambda$ and the dimension such that
 $$\forall x \in \Omega, \qquad T(x) \geq c\inf_{y \in \partial \Omega}{\| x - y\|^2}.$$
This constant is assumed if $\Omega$ is a ball and $x$ is the center.
 \item
If the domain has a finite capacitary  width $w_{\eta'}$ as introduced by Aikawa in \cite{Aik98} then there is a positive constant $c=c(\eta,\eta',n,\lambda,\Lambda)$ such that $T_{\eta}(x) \le c \, w_{\eta'}^2$ for all $x \in \Omega$.
\item If $\Omega \subset \mathbb{R}^2$ is simply connected, then $T(x) \sim_{\lambda, \Lambda} \inf_{y \in \partial \Omega}{\| x - y\|^2}$. In higher dimensions this is true if, e.g., the domain is convex or satisfies an exterior cone condition (see \cite{biswas}). In general, relating the median exit time to any kind of `distance to the boundary' puts restrictions on the geometry of the domain. 
\end{enumerate}
\end{rem}


\subsection{Main result.}

Our main result is the following Faber-Krahn type inequality. It states that if $u$ solves $$\di(A \cdot \nabla u) + Vu = 0,$$ and $|u|$ has its maximum in $x_0 \in \Omega$, then $T(x_0)^{1/2}$ defines a characteristic scale such that the potential $V$ has to be large somewhere inside the domain on that scale. The novelty of this Theorem is that it is independent of the overall shape of the domain.

\begin{theorem} \label{thm:1}
Let $\Omega \subset \R^n$, $n \geq 3$, be a bounded domain. If $u \not\equiv 0$ is a nontrivial solution of
\begin{align*} \begin{split}
\di(A \cdot \nabla u) + Vu &= 0 \quad \mbox{ in } \Omega,\\
 u|_{\partial \Omega} &= 0
\end{split}
\end{align*}
and $|u|$ assumes a global maximum in $x_0$, then there exists a ball $B \subset \mathbb{R}^n$ of radius $T(x_0)^{1/2}$ such that 
$$ \| V^+ \|_{L^{\frac{n}{2},1}(\Omega \cap B)} \geq c_{n, \lambda, \Lambda},$$
where $c_{n, \lambda, \Lambda} > 0$ depends only on the uniform ellipticity constants $\lambda, \Lambda$ and the dimension.
\end{theorem}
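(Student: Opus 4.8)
The plan is to combine the Feynman--Kac representation for the diffusion $(X_t)$ generated by $\di(A\cdot\nabla\,\cdot)$ and killed upon leaving $\Omega$ with a Khas'minskii-type bound: a small $L^{n/2,1}$-norm of the potential forces small exponential moments of its occupation integral, and this can be played against the defining property of the median exit time $T(x_0)$. After replacing $u$ by $-u$ if necessary, I may assume $u(x_0)=\max_{\overline\Omega}|u|=1$ (note $x_0\in\Omega$ since $u$ vanishes on $\partial\Omega$ and $u\not\equiv 0$). I would fix a large constant $K=K(n,\lambda,\Lambda)$, to be chosen at the end, set $R:=K\,T(x_0)^{1/2}$ and $B:=B(x_0,R)$, let $\sigma_R$ be the first exit time of $B$ for the diffusion, and put $\rho:=\tau\wedge\sigma_R$, the exit time of $\Omega\cap B$. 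The argument is by contradiction: suppose $\|V^+\|_{L^{n/2,1}(\Omega\cap B)}<\varepsilon$ for a small $\varepsilon=\varepsilon(n,\lambda,\Lambda)>0$ to be fixed.

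The first step is to localize the potential, and this is where $n\geq 3$ is essential. Writing $G$ for the Green function of $\Omega$ for $\di(A\cdot\nabla\,\cdot)$, the Littman--Stampacchia--Weinberger bounds give $G(x,y)\leq G_{\R^n}(x,y)\lesssim_{n,\lambda,\Lambda}|x-y|^{2-n}$, and $y\mapsto|x-y|^{2-n}$ lies in $L^{\frac{n}{n-2},\infty}(\R^n)$ with a norm independent of $x$. Hölder's inequality in Lorentz spaces, applied to the conjugate pair $\big(\tfrac{n}{n-2},\infty\big)$, $\big(\tfrac n2,1\big)$, then yields
$$ N:=\sup_{x\in\Omega}\E_x\Big[\int_0^\tau (V^+\mathbf 1_B)(X_s)\,ds\Big]=\sup_{x}\int_\Omega G(x,y)\,(V^+\mathbf 1_B)(y)\,dy\leq C_{n,\lambda,\Lambda}\,\|V^+\|_{L^{\frac n2,1}(\Omega\cap B)}<C_{n,\lambda,\Lambda}\,\varepsilon. $$
This is precisely the estimate that breaks down for $n=2$, where $G$ is logarithmic and lies in no $L^{p,\infty}$ with $p<\infty$, consistent with the statement. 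Choosing $\varepsilon$ so small that $C_{n,\lambda,\Lambda}\varepsilon<\tfrac18$, Khas'minskii's lemma gives $\E_{x_0}\big[\exp\big(2\int_0^\rho V^+(X_s)\,ds\big)\big]\leq(1-2N)^{-1}<2$; in particular this exponential moment is finite, which will be needed to make Feynman--Kac valid.

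The second step is the probabilistic core. Finiteness of the exponential moment lets me invoke the Feynman--Kac identity on the stopped region, $u(x_0)=\E_{x_0}\big[u(X_\rho)\exp\big(\int_0^\rho V(X_s)\,ds\big)\big]$. On $\{\rho=\tau\}$ the path reaches $\partial\Omega$ before leaving $B$, so $u(X_\rho)=0$ there (using $u\in W^{1,2}_0(\Omega)$ and that the set of irregular boundary points is polar); on $\{\rho=\sigma_R<\tau\}$ one has $|u(X_\rho)|\leq1$ and $\int_0^\rho V^+(X_s)\,ds=\int_0^\rho(V^+\mathbf 1_B)(X_s)\,ds$. Cauchy--Schwarz together with the first step then gives
$$ 1=u(x_0)\leq\E_{x_0}\Big[\exp\Big(\int_0^\rho(V^+\mathbf 1_B)(X_s)\,ds\Big)\mathbf 1_{\{\sigma_R<\tau\}}\Big]\leq\P_{x_0}(\sigma_R<\tau)^{1/2}(1-2N)^{-1/2}, $$
hence $\P_{x_0}(\sigma_R<\tau)\geq1-2N>\tfrac34$. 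On the other hand $\{\sigma_R<\tau\}\subseteq\{\sigma_R\leq T(x_0)\}\cup\{\tau>T(x_0)\}$: the first event has probability $\leq Ce^{-cK^2}$ by the Gaussian exit estimate for uniformly elliptic diffusions from $B(x_0,R)$ (Aronson bounds; note $R^2/T(x_0)=K^2$), and the second has probability $\leq\tfrac12$ directly from the definition of the median exit time. Taking $K=K(n,\lambda,\Lambda)$ large enough that $Ce^{-cK^2}<\tfrac18$ forces $\P_{x_0}(\sigma_R<\tau)\leq\tfrac58<\tfrac34$, a contradiction; therefore $\|V^+\|_{L^{n/2,1}(\Omega\cap B(x_0,R))}\geq\varepsilon$. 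To pass from radius $R=K\,T(x_0)^{1/2}$ to radius $T(x_0)^{1/2}$ I would cover $B(x_0,R)$ by $N_0\lesssim_n K^n$ balls of radius $T(x_0)^{1/2}$ and use that, for $n\geq3$, $\|\cdot\|_{L^{n/2,1}}$ is equivalent to a norm (and $V^+\geq0$): one of these balls $B$ satisfies $\|V^+\|_{L^{n/2,1}(\Omega\cap B)}\geq\varepsilon/N_0=:c_{n,\lambda,\Lambda}>0$, as claimed.

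I expect the main obstacle to be the rigorous justification of the second step for a merely distributional $W^{1,2}_0(\Omega)$ solution of a divergence-form operator with an $L^{n/2,1}$ (hence locally Kato-class) potential: interior continuity of $u$, the validity of the stopped Feynman--Kac formula (for which the finite exponential moment from the first step is exactly what is needed), and the boundary identity $u(X_\tau)=0$, which rests on standard but delicate potential theory for the regularity of boundary points of the diffusion. By contrast, the Lorentz--Hölder/Green-function estimate of the first step — which is the conceptual reason the exponent is $(n/2,1)$ and the reason $n\geq 3$ is required — is short once the Littman--Stampacchia--Weinberger bounds and Hölder's inequality in Lorentz spaces are in hand.
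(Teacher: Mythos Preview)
Your proof is correct and follows a genuinely different route from the paper. The paper applies the Feynman--Kac formula at the \emph{deterministic} time $t=T(x_0)$ to obtain $\E_{x_0}\big[\mathbf 1_{\{\tau>t\}}\exp(\int_0^t V^+(X_s)\,ds)\big]\ge 1$, splits off $\P_{x_0}(\tau>T(x_0))^{1/2}\le 1/\sqrt2$ by Cauchy--Schwarz, reverses Khasminskii to get $\sup_x\E_x\int_0^{T(x_0)}2V^+(X_s)\,ds\ge\tfrac12$, and then bounds this occupation integral by integrating Aronson's Gaussian upper bound for $p_s(x,y)$ over $s\in[0,T(x_0)]$ (their Lemmas~5 and~6); this yields $|x-y|^{2-n}$ times a Gaussian tail, and a covering-plus-tail-summation argument together with O'Neil's inequality gives the $L^{n/2,1}$ bound over \emph{some} ball of the right radius. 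You instead localize in space from the outset via the stopped Feynman--Kac identity at $\rho=\tau\wedge\sigma_R$, use the global Green-function bound $G_\Omega(x,y)\le G_{\R^n}(x,y)\lesssim|x-y|^{2-n}$ (so a single application of O'Neil replaces the paper's technical Lemma~6), and close the contradiction with the exit estimate $\P_{x_0}(\sigma_R\le T(x_0))\lesssim e^{-cK^2}$. What your route buys: the ball is automatically located within $KT(x_0)^{1/2}$ of $x_0$ (the paper's ball is not a~priori localized near $x_0$, since it arises from a supremum over all starting points), and the analytic core is cleaner. What the paper's route buys: it avoids the stopped Feynman--Kac formula and the ball-exit estimate, needing only the fixed-time identity; the justification concerns you flag in your last paragraph are therefore slightly milder in their version.
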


\begin{rem} Several remarks are in order.
\begin{enumerate}  
\item  $L^{\frac{n}{2}, 1}$ is the usual Lorentz space refinement of $L^p$ spaces.
\item The proof yields a slightly stronger result: we can apply this result whenever $u$ assumes global maximum in $x_0$ and $u(x_0) > 0$ and whenever $u$ assumes a global minimum in $x_0$ and $u(x_0) < 0$. Since $u$ is nontrivial, the global maximum of $|u|$ falls into one of these categories.
\item 
We note the following immediate corollary: if  $-\Delta u = \lambda_1(\Omega) u$, then
\begin{align*}
  1 \lesssim \| V \|_{L^{\frac{n}{2},1}(\Omega \cap B(x_0, r))}  &=  \| \lambda_1(\Omega) \|_{L^{\frac{n}{2}}(\Omega \cap B(x_0, r))} \\
&\sim \left( \lambda_1(\Omega)^{\frac{n}{2}} r^{n} \right)^{\frac{2}{n}} \sim \lambda_1(\Omega) T(x_0).\end{align*}
This implies the existence of a ball of radius $\sim T(x_0)^{1/2} \gtrsim \lambda_1^{-1/2}$ that has large parts inside $\Omega$ (cf. the properties of the median exit time in Remark \ref{rem:median exit time}). This result was first established by Lieb \cite{lieb} and refined by Georgiev \& Mukherjee \cite{georg}. 
\end{enumerate}
\end{rem}

\subsection{The case $n=2$.}

 The case $n=2$ is slightly different: Theorem 1 stated in $n=2$ dimensions fails. We illustrate this with an example on the unit disk $\mathbb{D} \subset \mathbb{R}^2$ given by De Carli, Edward, Hudson \& Leckband \cite{decarli2}: define the radial function $u_{\varepsilon}(r)$ by
$$ u(r) = \begin{cases} \frac{1}{2} - \log{\varepsilon} -  \frac{1}{2}\varepsilon^{-2} r^2 \qquad &\mbox{if}~0 \leq r \leq \varepsilon\\
- \log{r} \qquad &\mbox{if}~\varepsilon \leq r \leq 1. \\ \end{cases}$$
Both $u_{\varepsilon}$ and its derivative $u_{\varepsilon}'$ are continuous. We observe that $\Delta u_{\varepsilon} \sim \varepsilon^{-2} 1_{\left\{|x| \leq \varepsilon\right\}} $ and $\|u\|_{L^{\infty}} \sim -\log{\varepsilon}$. Put differently,
we have $\Delta u_{\varepsilon} = V_{\varepsilon} u_{\varepsilon}$ for a potential $V_{\varepsilon}$ satisfying  
$$V_{\varepsilon}(x) \sim \varepsilon^{-2}(-\log{\varepsilon})^{-1} 1_{\left\{|x| \leq \varepsilon\right\}}.$$
Obviously, $\| V_{\varepsilon} \|_{L^1(\mathbb{D})} \sim (-\log{\varepsilon})^{-1}$ is not bounded from below. Hence there cannot be a lower bound on $\| V_{\varepsilon} \|_{L^1(\Omega \cap B)}$ for a ball $B \subset \mathbb{D}$. 
Theorem 2 will show that this type of logarithmic behavior is actually the worst possible case: note that the convolution
$$ |V_{\varepsilon}| *  \left|(\log{|x|}) 1_{\left\{|x| \leq 1\right\}}\right|(0) = \int_{\{|y| \leq \varepsilon\}}{ \frac{ |\log{(|y|)}|}{\log{\left(\frac{1}{\varepsilon}\right)}\varepsilon^2} dy}   \sim  1  $$
and therefore, with an implicit constant that is independent of $\varepsilon$,
$$  |V_{\varepsilon}| *  \left|(\log{|x|}) 1_{\left\{|x| \leq 1\right\}}\right|(0) \gtrsim 1.$$
We will prove that this holds in general.
\begin{theorem}[Local Faber-Krahn inequality, $n=2$] \label{thm:2}
Let $\Omega \subset \R^2$ be a bounded domain. Let $u$ is a nontrivial $(u \neq 0)$ solution of
\begin{align*} \begin{split} 
\di(A \cdot \nabla u) + Vu &= 0 \quad \mbox{ in } \Omega,\\
 u|_{\partial \Omega} &= 0.
\end{split}
\end{align*}
There exists a constant $c >0$ depending only on $\lambda, \Lambda$ such that if $|u|$ assumes a global maximum in $x_0$, then there exists a ball $B=B(x,(c T(x_0))^{1/2}) \subset \mathbb{R}^2$ such that 
$$  \int_{B}{ |V^+(y)|  \log{\left(\frac{c T(x_0)}{|x-y|^2}\right)} dy} \geq c_{\lambda, \Lambda}.$$
\end{theorem}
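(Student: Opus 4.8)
The plan is to combine the Feynman--Kac formula, Khasminskii's lemma, and the logarithmic decay of the Green function of a divergence-form operator in the plane, working at the natural scale $R\asymp T(x_0)^{1/2}$ supplied by the median exit time. First I would reduce to $u(x_0)=1=\|u\|_{L^\infty(\Omega)}$ (replacing $u$ by $-u$ and rescaling, neither of which changes $V$ or the equation), so that $u\le 1$ on $\overline\Omega$ and $u=0$ on $\partial\Omega$. Writing $(X_t)$ for the diffusion generated by $\di(A\,\nabla\cdot)$ and $\tau$ for its exit time from $\Omega$, I would fix a large constant $C=C(\lambda,\Lambda)$, set $R=C\,T(x_0)^{1/2}$, and let $\rho$ be the first exit time of $B(x_0,R)$. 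Stopping the Feynman--Kac martingale at $\tau\wedge\rho$ gives
$$1=u(x_0)=\E_{x_0}\!\left[\,u(X_{\tau\wedge\rho})\exp\!\Big(\int_0^{\tau\wedge\rho}V(X_s)\,ds\Big)\right].$$
On $\{\tau\le\rho\}$ the factor $u(X_{\tau\wedge\rho})=u(X_\tau)$ vanishes, so only $\{\rho<\tau\}$ contributes; bounding $u(X_\rho)\le 1$ and $V\le V^+$ yields
$$1\le \E_{x_0}\!\left[e^{A}\,;\,\rho<\tau\right],\qquad A:=\int_0^{\tau\wedge\rho}V^+(X_s)\,ds,$$
where $A$ only sees $V^+$ on $B(x_0,R)\cap\Omega$.

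The role of the ball is to bound $\P_{x_0}(\rho<\tau)$ away from $1$. Indeed $\P_{x_0}(\tau\le T(x_0))\ge 1/2$ by definition of the median exit time, while the Gaussian (Aronson-type) exit estimate for uniformly elliptic diffusions gives $\P_{x_0}(\rho\le T(x_0))=\P_{x_0}(\sup_{s\le T(x_0)}|X_s-x_0|\ge C\,T(x_0)^{1/2})\le c_1 e^{-c_2C^2}\le 1/4$ once $C$ is large in terms of $\lambda,\Lambda$; hence $\P_{x_0}(\tau\le\rho)\ge\P_{x_0}(\tau\le T(x_0),\,\rho>T(x_0))\ge 1/4$, i.e.\ $\P_{x_0}(\rho<\tau)\le 3/4$. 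Since $e^A\ge 1$ this forces
$$\tfrac14\le \E_{x_0}\!\left[(e^A-1)\,;\,\rho<\tau\right]\le \E_{x_0}[e^A]-1 .$$
Now set $\Phi:=\sup_{x\in B(x_0,R)\cap\Omega}\E_x\!\left[\int_0^{\tau\wedge\rho}V^+(X_s)\,ds\right]$. If $\Phi<1/5$, then Khasminskii's lemma gives $\E_{x_0}[e^A]\le(1-\Phi)^{-1}<5/4$ — and also supplies the uniform integrability needed to justify the stopped Feynman--Kac identity — contradicting $\E_{x_0}[e^A]\ge 5/4$. Therefore $\Phi\ge 1/5$, so there is $x_1\in B(x_0,R)\cap\Omega$ with $\E_{x_1}\!\left[\int_0^{\tau\wedge\rho}V^+(X_s)\,ds\right]\ge\tfrac18$.

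Finally I would localize using the Green function. The last expectation equals $\int G_{B(x_0,R)\cap\Omega}(x_1,y)\,V^+(y)\,dy$, where $G_D$ is the Dirichlet Green function of $\di(A\,\nabla\cdot)$ on $D$. Since $|x_1-x_0|<R$ we have $B(x_0,R)\cap\Omega\subset B(x_1,10R)$, so domain monotonicity of $G$ together with the standard bound $G_{B(x_1,10R)}(x_1,y)\le C_{\lambda,\Lambda}\log\frac{(10R)^2}{|x_1-y|^2}$ for $|x_1-y|\le 5R$ (a consequence of Aronson's heat kernel bounds and the spectral decay of the Dirichlet heat kernel, the two-dimensional case needing a little more care than $n\ge 3$), and the fact that the integrand $|V^+(y)|\log\frac{(10R)^2}{|x_1-y|^2}$ is nonnegative on $B(x_1,10R)$, give
$$\tfrac18\le C_{\lambda,\Lambda}\int_{B(x_1,10R)}|V^+(y)|\,\log\frac{(10R)^2}{|x_1-y|^2}\,dy .$$
Taking $c:=100\,C^2$, so that $(10R)^2=c\,T(x_0)$, and $B:=B\big(x_1,(c\,T(x_0))^{1/2}\big)$ then yields the assertion with $c_{\lambda,\Lambda}=(8\,C_{\lambda,\Lambda})^{-1}$.

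I expect the conceptual crux to be the choice of stopping time: stopping at $\tau\wedge\rho$ rather than at $\tau\wedge T(x_0)$ is precisely what converts the median exit time into the bound $\P_{x_0}(\rho<\tau)\le 3/4$ while confining everything to $B(x_0,R)$ and keeping Khasminskii's lemma applicable. The genuinely technical points, which I expect to be routine under the paper's standing assumptions but which must be handled with care, are: (i) the rigorous Feynman--Kac/It\^o representation and the uniform integrability of the stopped exponential for a distributional $W^{1,2}_0$ solution and a potential $V$ that is only mildly integrable — this is where the Dirichlet-form framework enters; and (ii) the Gaussian exit estimate and the logarithmic Green-function bound for divergence-form operators in dimension two.
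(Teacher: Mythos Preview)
Your argument is correct but follows a genuinely different route from the paper. The paper works with the diffusion stopped at the \emph{fixed time} $t=T(x_0)$: from the inequality $1\le \E_{x_0}\!\big[1_{\{\tau>T(x_0)\}}\exp\big(\int_0^{T(x_0)}V^+(X_s)\,ds\big)\big]$ it applies Cauchy--Schwarz to separate the indicator, uses Khasminskii for fixed time to obtain $\sup_{x}\E_x\int_0^{T(x_0)}2V^+(X_s)\,ds\ge\tfrac12$, then rewrites the expectation as $\int V^+(y)\int_0^{T(x_0)}p_s(x,y)\,ds\,dy$. The time integral of Aronson's bound produces (Lemma~\ref{lem:gauss}) a kernel of the form $\big(1+\log_+\tfrac{c_2T(x_0)}{|x-y|^2}\big)e^{-|x-y|^2/(c_2T(x_0))}$, and a separate covering argument (Lemma~\ref{lem:final}) is then needed to absorb the Gaussian tail and localize the supremum to a single ball of radius $\asymp T(x_0)^{1/2}$.

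You instead perform the localization \emph{spatially} by stopping at $\tau\wedge\rho$, the exit time from $B(x_0,R)\cap\Omega$ with $R=CT(x_0)^{1/2}$. This builds the ball directly into the expectation, so that after Khasminskii (now for an exit time rather than a fixed time, which works because $\tau\wedge\rho$ is a terminal time) you land immediately on $\int G_{B(x_0,R)\cap\Omega}(x_1,y)V^+(y)\,dy$, and the logarithmic Green-function bound finishes the job without any covering lemma. The trade-off is that you must invoke the stopped Feynman--Kac identity, the exit-time form of Khasminskii, a Gaussian exit estimate for $\rho$, and the two-dimensional Green-function bound for divergence-form operators --- all standard, as you note, but external to the paper --- whereas the paper's approach stays closer to the fixed-time semigroup framework at the cost of the additional covering Lemma~\ref{lem:final}. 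Both routes are valid; yours is arguably more direct once the background ingredients are granted.
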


\subsection{Lieb-type inequalities.} 
One particularly interesting consequence are Lieb-type inequalities that follow from the same approach. Avoiding any notion of `distance to the boundary' of the domain, Lieb's theorem relates the norm of the potential to the (Euclidean) geometry of the domain.


\begin{theorem}[Lieb-type inequality] \label{thm:Lieb-type}
Let $n \geq 3$ and let $\Omega \subset \R^n$ be a bounded domain. Let $u$ be a nontrivial solution of the Dirichlet problem
\begin{align*}
\di(A \cdot \nabla u) + Vu &= 0 \quad \mbox{ in } \Omega,\\
 u|_{\partial \Omega} &= 0.
\end{align*}
Suppose $|u|$ assumes its maximum in $x_0 \in \Omega$. Then there is a constant $C=C(n,\lambda,\Lambda) >0$ such that the following holds: If the potential $V$ is so small that, for some $\eta \in (0,1)$, 
 $\left\| V^+ \right\|_{L^{\frac{n}{2},1}(B)} < C^{-1} \eta$ for all balls $B \subset \R^n$ of radius at most $T_{\eta}(x_0)^{1/2}$, then the point $x_0$ must be so close to the boundary that
$$ |B(x_0,T_{\eta}(x_0)^{1/2}) \cap \Omega| \geq \frac{1-2\eta}{1-\eta} | B(x_0,T_{\eta}(x_0)^{1/2})|.$$
\end{theorem}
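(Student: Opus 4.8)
The plan is to re‑run the Feynman--Kac/subsolution argument that underlies Theorems~\ref{thm:1} and~\ref{thm:2}, but to extract from it a lower bound on the probability that the diffusion started at $x_0$ is still inside $\Omega$ at the median exit time, and then to convert this survival probability into a statement about $|B(x_0,T_\eta(x_0)^{1/2})\cap\Omega|$.

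First I would reduce to the case $u(x_0)=\sup_\Omega u>0$: if instead $u(x_0)=\inf_\Omega u<0$, replace $u$ by $-u$, which solves the same equation with the same $V$ (hence the same $V^+$), so the two cases are symmetric and, since $u\not\equiv0$, one of them occurs. By Kato's inequality, $|u|$ is a nonnegative subsolution of $\di(A\nabla v)+V^+v=0$ in $\Omega$ vanishing on $\partial\Omega$: on $\{u>0\}$ one has $(\di(A\nabla\cdot)+V^+)u=V^-u\ge0$ and similarly on $\{u<0\}$, and Kato's inequality patches across $\{u=0\}$. The Feynman--Kac representation for the Dirichlet semigroup of $\di(A\nabla\cdot)+V^+$ then gives, for every $t>0$,
\[
u(x_0)=|u(x_0)|\le \E_{x_0}\!\left[\mathbf{1}_{\{\tau>t\}}\,e^{\int_0^t V^+(X_s)\,ds}\,|u(X_t)|\right].
\]
Evaluating at $t=T:=T_\eta(x_0)$, bounding $|u(X_t)|\le u(x_0)$ on the right, dividing by $u(x_0)>0$, and using the Cauchy--Schwarz inequality yields
\[
1\le \P_{x_0}(\tau>T)^{1/2}\,\E_{x_0}\!\left[e^{2\int_0^{\tau\wedge T}V^+(X_s)\,ds}\right]^{1/2}.
\]

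The next step is to control the exponential moment. By Khasminskii's lemma it suffices to bound $M:=\sup_{z\in\Omega}\E_z\!\big[\int_0^{\tau\wedge T}V^+(X_s)\,ds\big]$; if $M$ is small then $\E_{x_0}\!\big[e^{2\int_0^{\tau\wedge T}V^+}\big]\le(1-2M)^{-1}$. To estimate $M$ I would use the Aronson Gaussian upper bound $p^\Omega_s(z,y)\le p_s(z,y)\lesssim s^{-n/2}e^{-c|z-y|^2/s}$ (constants depending only on $n,\lambda,\Lambda$), integrate in $s\in[0,T]$, and observe that $\int_0^T p_s(z,\cdot)\,ds$ is $\lesssim|z-\cdot|^{2-n}$ on $B(z,T^{1/2})$ and decays superexponentially outside it. The near--diagonal contribution is then handled by the Lorentz--Hölder inequality $\int_{B(z,T^{1/2})}|z-\cdot|^{2-n}V^+\lesssim\|V^+\|_{L^{n/2,1}(B(z,T^{1/2}))}$ --- this is exactly where the Lorentz space $L^{n/2,1}$ is forced on us --- while the far contribution is a dyadic sum of annular integrals in which the Gaussian factor dominates the polynomial loss incurred by covering a large ball by balls of radius $T^{1/2}$. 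Summing gives $M\le C_0\,\sup\{\|V^+\|_{L^{n/2,1}(B)}:B\text{ a ball of radius }\le T^{1/2}\}$ with $C_0=C_0(n,\lambda,\Lambda)$, which is precisely the estimate already used for Theorems~\ref{thm:1} and~\ref{thm:2}. Under the hypothesis this is $<C_0C^{-1}\eta$, so feeding it back through Khasminskii and the Cauchy--Schwarz bound gives $\P_{x_0}(\tau>T_\eta(x_0))\ge 1-\tfrac{2C_0}{C}\eta$, i.e.\ the diffusion from $x_0$ is very likely still inside $\Omega$ at the median exit time.

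It remains to turn this into the geometric statement, and this is the step I expect to be the most delicate (mainly in the bookkeeping of constants). I would prove a comparison lemma of the form
\[
\P_{x_0}(\tau>R^2)\le 1-c\,\frac{|B(x_0,R)\setminus\Omega|}{|B(x_0,R)|},\qquad R:=T_\eta(x_0)^{1/2},
\]
with $c=c(n,\lambda,\Lambda)>0$: split $\P_{x_0}(\tau>R^2)=\int_\Omega p^\Omega_{R^2}(x_0,y)\,dy$ into the part over $B(x_0,R)$, where $p^\Omega_{R^2}\le p_{R^2}$ and the removed set $B(x_0,R)\setminus\Omega$ forces a deficit because $p_{R^2}(x_0,y)\gtrsim R^{-n}$ there, plus the part over $\Omega\setminus B(x_0,R)$, bounded by the dimensional probability that the diffusion has moved distance $\ge R$ by time $R^2$. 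Combining with the lower bound from the previous step gives $c\,|B(x_0,R)\setminus\Omega|/|B(x_0,R)|\le\tfrac{2C_0}{C}\eta$, and choosing $C=C(n,\lambda,\Lambda)$ large enough that $\tfrac{2C_0}{cC}\le1$ yields $|B(x_0,R)\setminus\Omega|\le\eta\,|B(x_0,R)|\le\tfrac{\eta}{1-\eta}|B(x_0,R)|$, which is exactly $|B(x_0,R)\cap\Omega|\ge\tfrac{1-2\eta}{1-\eta}|B(x_0,R)|$. The main obstacle is the rigorous justification of the Feynman--Kac inequality and of Khasminskii's lemma --- this is the only place where the smallness hypothesis on $V^+$ is genuinely used, and it is what guarantees that the exponential functional is finite --- together with the sharpness of the constants in the comparison lemma; the continuity of $u$ needed to speak of "$u$ attains its maximum at $x_0$" follows from interior De Giorgi--Nash--Moser regularity for $V\in L^{n/2,1}_{\mathrm{loc}}$.
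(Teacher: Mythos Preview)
Your proposal uses the same analytic core as the paper (the Feynman--Kac inequality, Cauchy--Schwarz, Khasminskii's lemma, and the Aronson/Lorentz estimate of Lemma~\ref{lem:est}), but you run the chain in the opposite direction and then append an extra geometric step that the paper does not need. The paper's argument is simply this: by the \emph{definition} of $T_\eta(x_0)$ one has $\P_{x_0}(\tau>T_\eta(x_0))\le 1-\eta$; Cauchy--Schwarz then gives $\E_{x_0}\big[\exp\big(2\int_0^{T_\eta}V^+\big)\big]\ge(1-\eta)^{-1}$, Khasminskii (in contrapositive form) forces $\sup_x\E_x\int_0^{T_\eta}2V^+\ge\eta$, and Lemma~\ref{lem:est} yields $\sup_B\|V^+\|_{L^{n/2,1}(B)}\ge C^{-1}\eta$, contradicting the smallness hypothesis. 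No comparison lemma relating $\P_{x_0}(\tau>R^2)$ to $|B(x_0,R)\setminus\Omega|$ is ever invoked; the survival-probability bound comes for free from the definition of the median exit time. (In fact the line ``Suppose $|B\cap\Omega|<\dots$'' opening the paper's proof is never used thereafter.) Your route instead uses the smallness hypothesis to bound the exponential moment from \emph{above}, deduces a \emph{lower} bound $\P_{x_0}(\tau>T_\eta)\ge1-2C_0C^{-1}\eta$, and converts this to the geometric conclusion via your comparison lemma $\P_{x_0}(\tau>R^2)\le 1-c\,|B(x_0,R)\setminus\Omega|/|B(x_0,R)|$. This is correct (the lemma follows from the Aronson lower bound, as you sketch), but it is unnecessary: with your choice of $C$ the lower bound on $\P_{x_0}(\tau>T_\eta)$ already meets or exceeds the upper bound $1-\eta$ coming from the definition of $T_\eta$, so you have reached the paper's contradiction before the comparison lemma enters.
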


\section{Proof of Theorem \ref{thm:1}}
This section is devoted to the proof of Theorem \ref{thm:1}. The argument decouples nicely into several different parts. First, we derive the fundamental inequality \eqref{eq:fund ineq}
that was already used in \cite{manas, stein}. The other subsections develop different types of tools that will allow us to extract the desired information from inequality \eqref{eq:fund ineq}. 
As before, we let $(X_t)_{t \ge 0}$ be the diffusion process on $\Omega$, generated by the uniformly elliptic operator $\di(A \cdot \nabla)$, with absorption at the boundary.  We introduce a cemetery state $\Delta$ and set $V(\Delta)=0$ and $u(\Delta )=0$.

\subsection{A preliminary lower bound}\label{sec:eins}
We consider the solution of \eqref{eq:equation} as a steady-state solution of the parabolic equation
$$ \partial_t u - \left( \di(A \cdot \nabla u) + Vu \right) = 0.$$
By the Feynman-Kac formula, 
$$\forall t\geq 0, \qquad  u(x) =  \E_{x} \left(u(X_t) \exp \left( \int_0^t V(X_s) ds \right)  \right).$$
We may assume that $u$ assumes a global maximum in $x_0$ and $u(x_0) > 0$ (otherwise consider $-u$ and note that $-u$ also solves \eqref{eq:equation}). Recall that $\tau$ is the first exit time from $\Omega$. Then,
\begin{align*}
 u(x_0)
&= \E_{x_0} \left(u(X_t) 1_{\left\{\tau > t\right\}}\exp \left( \int_0^t V(X_s) ds \right)  \right) \\
 &\leq u(x_0)  \E_{x_0}\left( 1_{\left\{\tau > t\right\}}  \exp \left( \int_0^t V^+(X_s) ds \right) \right).
 \end{align*}
Since $u(x_0) > 0$, this simplifies to
\begin{align} \label{eq:fund ineq}
  \E_{x_0}\left(   1_{\left\{\tau > t\right\}}  \exp \left( \int_0^t V^+(X_s) ds \right)\right) \geq 1.
\end{align}

\subsection{Khasminskii's Lemma}\label{sec:drei}

\begin{lemma}[Khasminskii's lemma] Let $V \geq 0$ be a measurable function and $(X_s)_{s\ge0}$ be a Markov process on $\mathbb{R}^{n}$ with the property that for some $t>0$ and $\alpha < 1$,
$$ \sup_{x \in \mathbb{R}^{n}}{ \E_{x} \left[ \int_0^t{V(X_s)} ds \right] }= \alpha.$$
Then
$$ \sup_{x \in \mathbb{R}^n}{ \E_{x} \left[ \exp \left( \int_{0}^{t}{V(X_s) ds} \right) \right]} \leq \frac{1}{1-\alpha}.$$
\end{lemma}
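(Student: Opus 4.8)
The standard approach is to expand the exponential as a power series and estimate the $k$-th term by iterating the hypothesis via the Markov property. First I would write
\[
\E_x\left[\exp\left(\int_0^t V(X_s)\,ds\right)\right] = \sum_{k=0}^\infty \frac{1}{k!}\,\E_x\left[\left(\int_0^t V(X_s)\,ds\right)^k\right],
\]
and the goal becomes showing that the $k$-th moment is bounded by $k!\,\alpha^k$, which immediately yields the geometric series $\sum_k \alpha^k = (1-\alpha)^{-1}$. To handle the $k$-th moment, I would symmetrize: by Fubini,
\[
\E_x\left[\left(\int_0^t V(X_s)\,ds\right)^k\right] = k!\,\E_x\left[\int_{0 \le s_1 \le \cdots \le s_k \le t} V(X_{s_1})\cdots V(X_{s_k})\,ds_1\cdots ds_k\right],
\]
so it suffices to bound the ordered integral by $\alpha^k$.

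The key step is the inductive estimate on the ordered integral using the Markov property. Conditioning on $\F_{s_{k-1}}$ and using the Markov property at time $s_{k-1}$, the innermost integral $\int_{s_{k-1}}^t \E[V(X_{s_k}) \mid \F_{s_{k-1}}]\,ds_k$ is at most $\sup_y \E_y[\int_0^{t} V(X_r)\,dr] = \alpha$ (after a time shift, using that the remaining time interval has length at most $t$). Peeling off one factor at a time in this fashion — each time taking a conditional expectation given the past, invoking the Markov property, and bounding the resulting integral of $V$ over a time window of length $\le t$ by $\alpha$ — gives the bound $\alpha^k$ for the ordered integral, hence $k!\,\alpha^k$ for the $k$-th moment, and the claim follows. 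Taking the supremum over $x$ at the end is harmless since every bound in the iteration is uniform in the starting point.

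The main obstacle, such as it is, is making the iterated conditioning rigorous: one must be careful that after conditioning at time $s_{k-1}$ the remaining integrand $\int_{s_{k-1}}^{s_k}\cdots$ is a function of the post-$s_{k-1}$ trajectory to which the Markov property applies, and that the time-shift reducing $[s_{k-1},t]$ to $[0,t-s_{k-1}] \subseteq [0,t]$ only decreases the integral of the nonnegative function $V$ along the path. Since $V \ge 0$ and the hypothesis is a uniform (in $x$) bound over the full interval $[0,t]$, no integrability subtleties arise beyond justifying Fubini, which is automatic for nonnegative integrands. I would also note that the argument uses only the Markov property and not any finer structure of the diffusion, so it applies verbatim to the process $(X_t)$ with absorption at $\partial\Omega$ once we set $V(\Delta)=0$.
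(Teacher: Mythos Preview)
Your proposal is correct and follows essentially the same argument as the paper: expand the exponential as a power series, rewrite the $k$-th moment as $k!$ times the integral over the ordered simplex, and then iteratively apply the Markov property to peel off one factor of $\alpha$ at a time. The paper's proof is identical in structure and level of detail.
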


Khasminskii's lemma is a classical tool in connection with the Feynman-Kac formula. For the convenience of the reader, we repeat the proof given in \cite{feyn, sim}.
\begin{proof} The argument proceeds by
showing the stronger result
$$ \mathbb{E}_x \left[\frac{1}{m!} \left( \int_0^t V(B_s) ds \right)^m \right] \leq \alpha^m$$
for all non-negative integers $m$. From this, the desired result then follows by summation. Expanding the power allows us to rewrite the statement as 
$$ \mathbb{E}_x \left[ \frac{1}{m!} \int_{0}^{t} \int_{0}^{t} \dots \int_{0}^{t} V(B_{s_1}) V(B_{s_2}) \dots V(B_{s_m}) ds_1 \dots ds_m \right] \leq \alpha^m.$$
There are $n!$ ways of ordering an $n-$tuple of point; therefore, defining
$$ \Delta_m = \left\{(s_1, \dots, s_n): 0 \leq s_1 \leq s_2 \dots \leq s_m  \leq t \right\},$$
we have the equivalent statement
$$  \mathbb{E}_x \left[\int_{\Delta_m} V(B_{s_1}) V(B_{s_2}) \dots V(B_{s_m}) ds_1 \dots ds_m \right] \leq \alpha^m.$$
This is where the Markovian property enters: for any fixed $s_1 \leq s_2 \leq \dots \leq s_{m-1}$
\begin{align*}
&\quad V(B_{s_1}) V(B_{s_2}) \dots V(B_{s_{m-1}})\mathbb{E}_x \left[  \int_{s_{n-1}}^{t}{V(B(s_m)) ds_m} \right]  \\
&\leq V(B_{s_1}) V(B_{s_2}) \dots V(B_{s_{m-1}}) \sup_{y\in \R^n}\mathbb{E}_y  \left[\int_{0}^{t}{V(B(s_m)) ds_m}\right]\\
&\leq V(B_{s_1}) V(B_{s_2}) \dots V(B_{s_{m-1}}) \alpha,
\end{align*}
where we have used that $V \geq 0$. The lemma now follows by induction.
\end{proof}

\subsection{A technical estimate.} \label{sec:vier}
The purpose of this subsection is to provide a self-contained proof of the following lemma for the convenience of the reader. Stronger results could be obtained by appealing to the literature centered around
special functions (especially results dealing with incomplete Gamma functions) but are not needed here.
\begin{lemma} \label{lem:gauss} Let $n \in \mathbb{N}$ and $d>0$. Let $x \in \R^n$. If $n=2$, then
$$ \int_{0}^{d}{ \frac{c_1}{s} \exp \left( -\frac{|x|^2}{c_2 s} \right)}ds \lesssim_{c_1, c_2} \left(1 + \max\left\{0, -\log{\left( \frac{|x|^2}{c_2 d} \right)} \right\} \right) \exp\left(-\frac{|x|^2}{c_2 d}\right).$$
If $n \in \left\{3,4\right\}$, then
$$ \int_{0}^{d}{ \frac{c_1}{s^{n/2}} \exp \left( -\frac{|x|^2}{c_2 s} \right)}ds \lesssim_{c_1, c_2,n }   |x|^{2-n} \exp \left( -\frac{|x|^2}{c_2 d} \right).$$
If $n \geq 5$, then there exists a polynomial $q(\cdot)$ of degree at most $(n-2)/2$ such that
$$ \int_{0}^{d}{ \frac{c_1}{s^{n/2}} \exp \left( -\frac{|x|^2}{c_2 s} \right)}ds \lesssim_{c_1, c_2,n }   |x|^{2-n} q \left( \frac{|x|^2}{c_2 d} \right)\exp \left( -\frac{|x|^2}{c_2 d} \right).$$
\end{lemma}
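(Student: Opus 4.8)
\textbf{Proof plan for Lemma \ref{lem:gauss}.}

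The natural move is to substitute $t = |x|^2/(c_2 s)$, which converts the integral over $s \in (0,d]$ into an integral over $t \in [a, \infty)$ with $a := |x|^2/(c_2 d)$. Indeed, $ds = -|x|^2/(c_2 t^2)\, dt$, so for $n \ge 3$ one gets
\begin{equation*}
\int_0^d \frac{c_1}{s^{n/2}} \exp\!\left(-\frac{|x|^2}{c_2 s}\right) ds
= c_1 \left(\frac{c_2}{|x|^2}\right)^{\!\frac{n}{2}-1} \frac{1}{c_2} \int_a^\infty t^{\frac{n}{2}-2} e^{-t}\, dt
= c_{c_1,c_2,n}\, |x|^{2-n} \,\Gamma\!\left(\tfrac{n}{2}-1, a\right),
\end{equation*}
where $\Gamma(\cdot,\cdot)$ is the upper incomplete Gamma function, and for $n=2$ the exponent of $t$ is $-1$ and the analogous computation gives $c_1 \int_a^\infty t^{-1} e^{-t}\, dt$, i.e. an exponential integral $E_1(a)$. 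So everything reduces to bounding $\Gamma(\beta, a)$ from above by $q(a) e^{-a}$ with $q$ a polynomial of degree $\lceil \beta \rceil$ (and by $(1 + \log^+(1/a)) e^{-a}$ in the $\beta = 0$ case).

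For the case $n=2$: split $\int_a^\infty t^{-1} e^{-t} dt = \int_a^1 t^{-1} e^{-t} dt + \int_1^\infty t^{-1} e^{-t} dt$ when $a < 1$ (when $a \ge 1$ just bound $t^{-1} \le 1$ and integrate to get $e^{-a}$, which is $\le (1 + \log^+(1/a))e^{-a}$ trivially since $\log^+(1/a) = 0$). On $[a,1]$ bound $e^{-t} \le 1$ to get $\int_a^1 t^{-1} dt = \log(1/a) = \log^+(1/a)$; on $[1,\infty)$ the integral is an absolute constant $\le C e^{-a}$ since $a < 1$ forces $e^{-a} \ge e^{-1}$. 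Adding, $\int_a^\infty t^{-1} e^{-t} dt \lesssim \log^+(1/a) + 1 \lesssim (1 + \log^+(1/a)) e^{-a}$ again using $e^{-a} \ge e^{-1}$ on $a<1$. Noting $\log^+(1/a) = \max\{0, -\log(|x|^2/(c_2 d))\}$ recovers the stated form.

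For $n \in \{3,4\}$: here $\beta = n/2 - 1 \in \{1/2, 1\}$, so $t^{\beta - 1} \le $ const on $[a, \infty)$ when $a$ is bounded below, but for small $a$ (i.e. $\beta = 1/2$, $n=3$) one has $t^{-1/2} e^{-t}$ which is integrable near $0$, so $\int_a^\infty t^{-1/2} e^{-t} dt \le \int_0^\infty t^{-1/2} e^{-t} dt = \Gamma(1/2) < \infty$; combined with the elementary bound $\Gamma(\beta, a) \le C_\beta e^{-a}$ valid for all $a \ge 0$ when $\beta \le 1$ (proved by splitting at $a$ versus using $\Gamma(\beta,a) \le \Gamma(\beta)$ and $e^{-a} \ge$ something, or more cleanly: for $\beta \le 1$, $\Gamma(\beta, a) = \int_a^\infty t^{\beta-1} e^{-t} dt$; if $a \ge 1$ then $t^{\beta-1} \le 1$ so it is $\le e^{-a}$; if $a < 1$ then $\Gamma(\beta,a) \le \Gamma(\beta) \le \Gamma(\beta) e \cdot e^{-a}$), we get $\Gamma(n/2-1, a) \lesssim_n e^{-a}$, hence the claimed $|x|^{2-n} e^{-a}$.

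For $n \ge 5$: now $\beta = n/2 - 1 \ge 3/2 > 1$, and the polynomial appears via repeated integration by parts. The identity $\Gamma(\beta, a) = (\beta-1)\Gamma(\beta-1, a) + a^{\beta-1} e^{-a}$ lets one induct downward on $\beta$ until the exponent drops into $(0,1]$, at which point the base-case bound $\Gamma(\gamma, a) \lesssim_\gamma e^{-a}$ (for $\gamma \le 1$, shown above) applies; if $\beta$ is an integer the induction terminates at $\Gamma(1,a) = e^{-a}$. Each step of the recursion contributes a term of the form $a^{\beta - 1 - k} e^{-a}$, and collecting these terms gives $\Gamma(\beta, a) \le q(a) e^{-a}$ where $q$ is a polynomial of degree $\lceil \beta \rceil - 1 \le (n-2)/2$ (or $\le \beta - 1 = (n-4)/2$ when $n$ is even, which is still $\le (n-2)/2$; when $n$ is odd the half-integer base case contributes a bounded term absorbed into the constant coefficient, keeping the degree $\le (n-3)/2 \le (n-2)/2$). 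Substituting $a = |x|^2/(c_2 d)$ back in yields exactly the stated inequality with $q$ of degree at most $(n-2)/2$. The only mildly delicate point — and the step I would be most careful with — is bookkeeping the degree of $q$ so that it comes out $\le (n-2)/2$ in all parity cases; everything else is the elementary estimate $\Gamma(\gamma, a) \lesssim_\gamma e^{-a}$ for $\gamma \in (0,1]$ plus the standard recursion for the incomplete Gamma function.
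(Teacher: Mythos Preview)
Your proposal is correct and essentially follows the paper's approach: both reduce via substitution to $\int_a^\infty y^{(n-4)/2} e^{-y}\,dy$ with $a = |x|^2/(c_2 d)$, split the $n=2$ case at $a=1$ identically, and handle $n\ge 5$ by iterated integration by parts (your incomplete-Gamma recursion is exactly that). The one cosmetic difference is that for odd $n\ge 5$ the paper first bounds $y^{(n-4)/2} \lesssim 1 + y^k$ with $k=\lceil (n-4)/2\rceil$ an integer before integrating by parts, so the polynomial $q$ has integer powers from the outset; your recursion produces half-integer powers $a^{\beta-1-j}$ along the way (not just at the base case as you wrote), which then need the trivial bound $a^{m+1/2}\le a^m + a^{m+1}$ to become a genuine polynomial---easy, but worth stating explicitly.
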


\begin{proof} The substitution $z= s/|x|^2$ shows 
$$  \int_{0}^{d}{ \frac{c_1}{s^{n/2}} \exp \left( -\frac{|x|^2}{c_2 s} \right)}ds \lesssim_{c_1}  |x|^{2-n}\int_{0}^{d/|x|^2}{ \frac{\exp(-1/(c_2 z))}{z^{n/2}} dz}.$$
Another substitution ($y=1/(c_2 z)$) yields
$$  \int_{0}^{d/|x|^2}{ \frac{\exp(-1/(c_2 z))}{z^{n/2}} dz} \lesssim_{c_2,n}  \int_{|x|^2/(c_2 d)}^{\infty}{y^{\frac{n-4}{2}} e^{-y} dy}.$$ 
We first consider the case $n=2$. If $|x|^2/(c_2 d) \leq 1$ we have
 $$ \int_{|x|^2/(c_2 d)}^{\infty}{y^{-1} e^{-y} dy} \lesssim 1 + \int_{|x|^2/(c_2 d)}^{1}{y^{-1} e^{-y} dy} \lesssim 1  + \int_{|x|^2/(c_2 d)}^{1}{y^{-1} dy} \lesssim 1 - \log{ \left(\frac{|x|^2}{c_2 d} \right)}, $$
and if $|x|^2/(c_2 d) \geq 1$ we have
 $$ \int_{|x|^2/(c_2 d)}^{\infty}{y^{-1} e^{-y} dy} \leq \frac{c_2 d}{|x|^2} \int_{|x|^2/(c_2 d)}^{\infty}{e^{-y} dy} =  \frac{c_2 d}{|x|^2} \exp\left(-\frac{|x|^2}{c_2 d}\right) \leq \exp\left(-\frac{|x|^2}{c_2 d}\right).$$ 
Summarizing, this establishes
 $$ \int_{|x|^2/(c_2 d)}^{\infty}{\frac{1}{y} e^{-y} dy} \lesssim \left(1 + \max\left\{0, -\log{\left( \frac{|x|^2}{c_2 d} \right)} \right\} \right) \exp\left(-\frac{|x|^2}{c_2 d}\right),$$
which is the desired statement for $n=2$. 
The cases $n \in \left\{3,4\right\}$ are even simpler: If $|x|^2/(c_2 d) \leq 1$ then everything is on scale $\sim 1$,
\begin{align*}
\exp\left(-\frac{|x|^2}{c_2 d}\right) \sim 1 \sim   \int_{0}^{\infty}{y^{\frac{n-4}{2}} e^{-y} dy}  \sim  \int_{|x|^2/(c_2 d)}^{\infty}{y^{\frac{n-4}{2}} e^{-y} dy} .
\end{align*}
If $|x|^2/(c_2 d) \geq 1$ then we may use $y^{\frac{n-4}{2}} \leq 1$ for $y \geq 1$ to estimate
$$  \int_{|x|^2/(c_2 d)}^{\infty}{y^{\frac{n-4}{2}} e^{-y} dy} \leq  \int_{|x|^2/(c_2 d)}^{\infty}{e^{-y} dy} = \exp\left(-\frac{|x|^2}{c_2 d}\right).$$
If $n \geq 5$, then we can bound $y^{\frac{n-4}{2}} \lesssim 1 + y^k$, where $k$ is the smallest integer bigger or equal than $(n-4)/2$. It then suffices to remark that integration by parts implies, for $k \in \mathbb{N}$,
$$ \int_{|x|^2/(c_2 d)}^{\infty}{y^{k} e^{-y} dy} =  \left(\frac{|x|^2}{c_2 d} \right)^k \exp\left(-\frac{|x|^2}{c_2 d}\right) +  \int_{|x|^2/(c_2 d)}^{\infty}{y^{k-1} e^{-y} dy}$$
and an iterative application implies the assertion. 
\end{proof}

\subsection{An upper bound on a convolution.} \label{sec:funf}
It is well known (see the work of Aronson \cite{aronson}) that the uniformly elliptic operator $\mbox{div}(A \cdot \nabla)$ admits a heat kernel $p_t(x,y)$ satisfying the Gaussian upper bound
$$ p_s(x,y) \leq \frac{c_1}{s^{n/2}} \exp \left( -\frac{|x-y|^2}{c_2 s} \right), \quad \forall s>0, x,y \in \R^n,$$
for some $c_1, c_2 > 0$ depending only on $\lambda,\Lambda,n$. The last ingredient for the proof of Theorem \ref{thm:1} is the following estimate.

\begin{lemma}\label{lem:est} Let $n \geq 3$, let $f: \mathbb{R}^n \rightarrow [0,\infty)$. For every $d > 0$,
$$ \sup_{x \in \mathbb{R}^n} \int_{\mathbb{R}^n}{ f(y) \int_{0}^{d^2}{p_s(x,y) ds } \, dy} \lesssim_{c_1,c_2,n} \sup_{|B| \leq  d^n} \| f\|_{L^{\frac{n}{2},1}(B)},$$
where the supremum ranges over all balls $B\subset \R^n$ with volume $|B| \leq d^n$.
\end{lemma}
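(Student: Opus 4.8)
The plan is to reduce the weighted heat-kernel integral to a radial kernel via the Aronson Gaussian upper bound, then decompose dyadically in the distance $|x-y|$ and match each annular contribution against the $L^{n/2,1}$ quasi-norm on a ball of controlled radius. First I would invoke the Gaussian bound $p_s(x,y) \le c_1 s^{-n/2}\exp(-|x-y|^2/(c_2 s))$ to get, for fixed $x$,
$$ \int_{\mathbb{R}^n} f(y) \int_0^{d^2} p_s(x,y)\, ds\, dy \;\le\; c_1 \int_{\mathbb{R}^n} f(y) \left( \int_0^{d^2} \frac{1}{s^{n/2}} \exp\Bigl(-\frac{|x-y|^2}{c_2 s}\Bigr) ds \right) dy. $$
The inner $s$-integral is exactly the object estimated in Lemma \ref{lem:gauss}: for $n\in\{3,4\}$ it is $\lesssim |x-y|^{2-n}\exp(-|x-y|^2/(c_2 d^2))$, and for $n\ge 5$ it is $\lesssim |x-y|^{2-n} q(|x-y|^2/(c_2 d^2))\exp(-|x-y|^2/(c_2 d^2))$ for a polynomial $q$ of degree $\le (n-2)/2$. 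In all cases the kernel is pointwise bounded by a radial, radially decreasing function $K(|x-y|)$ which behaves like the Newtonian kernel $|x-y|^{2-n}$ for $|x-y|\lesssim d$ and decays super-polynomially (Gaussian times polynomial) for $|x-y|\gtrsim d$. Translating, it suffices to bound $\int_{\mathbb{R}^n} f(y) K(|x-y|)\,dy$ by $\sup_{|B|\le d^n}\|f\|_{L^{n/2,1}(B)}$ uniformly in $x$.

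Next I would run a dyadic decomposition of $\mathbb{R}^n$ into the ball $B_0=B(x,d)$ and annuli $A_j = B(x, 2^j d)\setminus B(x,2^{j-1}d)$ for $j\ge 1$. On each piece I apply the duality/Hölder inequality for Lorentz spaces: $\int_S f\, g \le \|f\|_{L^{n/2,1}(S)}\|g\|_{L^{(n/2)',\infty}(S)} = \|f\|_{L^{n/2,1}(S)}\|g\|_{L^{\frac{n}{n-2},\infty}(S)}$. The crucial computation is that the weak-$L^{n/(n-2)}$ quasi-norm of the Newtonian-type kernel $y\mapsto |x-y|^{2-n}$ on \emph{any} ball is scale-invariant and finite: $\||x-\cdot|^{2-n}\|_{L^{\frac{n}{n-2},\infty}(\mathbb{R}^n)} = c_n < \infty$, since the distribution function of $|y|^{2-n}$ is $|\{|y|^{2-n}>\lambda\}| = c_n \lambda^{-n/(n-2)}$ exactly. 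Hence on $B_0$ we directly get the bound $\lesssim \|f\|_{L^{n/2,1}(B_0)} \le \sup_{|B|\le d^n}\|f\|_{L^{n/2,1}(B)}$. On the annulus $A_j$, the kernel $K$ is at most $(2^{j-1}d)^{2-n}$ times a polynomial-in-$2^{2j}$ factor times $\exp(-c\, 2^{2j})$; estimating crudely $\int_{A_j} f \le \|f\|_{L^{n/2,1}(A_j)}\, |A_j|^{1-2/n}$ (the Lorentz Hölder inequality with the constant function, since $\|1\|_{L^{(n/2)',\infty}(A_j)}\sim |A_j|^{2/(n-2)\cdot(n-2)/n}=|A_j|^{2/n}$ — I will record the exact exponent carefully) gives a contribution $\lesssim \|f\|_{L^{n/2,1}(A_j)} \cdot (2^j d)^{n-2}\cdot (2^j d)^{2-n} q(2^{2j}) e^{-c 2^{2j}} = q(2^{2j})e^{-c2^{2j}}\|f\|_{L^{n/2,1}(A_j)}$. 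Since each $A_j\subset B(x,2^jd)$ has volume $\le (2^j d)^n$, I cover it by $O(2^{(j-1)n})$ balls of radius $d$ and use quasi-subadditivity of $\|\cdot\|_{L^{n/2,1}}$ over a bounded-overlap cover to bound $\|f\|_{L^{n/2,1}(A_j)} \lesssim 2^{jn}\sup_{|B|\le d^n}\|f\|_{L^{n/2,1}(B)}$; the geometric-times-Gaussian factor $\sum_j 2^{jn} q(2^{2j}) e^{-c2^{2j}} < \infty$ then closes the sum.

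The main obstacle — and the place where I would spend the most care — is the bookkeeping of Lorentz-space constants and the subadditivity step: $\|\cdot\|_{L^{p,1}}$ is only a quasi-norm, so covering $A_j$ by many $d$-balls costs a factor that must be beaten by the Gaussian decay, and I must verify the covering constant grows only polynomially (in fact like the number of balls, $\sim 2^{jn}$) rather than something worse. An alternative, cleaner route that sidesteps quasi-subadditivity entirely is to note that for $n\ge 3$ the convolution kernel $\int_0^\infty s^{-n/2}e^{-|z|^2/(c_2 s)}ds \sim |z|^{2-n}$ is (a constant times) the Riesz potential kernel $I_2$, so $f\mapsto \int f(y)K(|x-y|)dy$ is controlled by $\|I_2 f\|_{L^\infty}$, and then use the sharp mapping property $I_2 : L^{n/2,1}(\mathbb{R}^n)\to L^{\infty}(\mathbb{R}^n)$ (the endpoint Sobolev embedding of O'Neil); truncating the kernel at scale $d$ and absorbing the tail via the Gaussian decay localizes this to balls of volume $\lesssim d^n$. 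I would present the dyadic argument as the primary proof since it is self-contained, but remark that the Riesz-potential viewpoint is what makes the exponent $L^{n/2,1}$ natural and sharp.
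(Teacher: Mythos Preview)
Your proposal is correct and shares the paper's core strategy: apply the Aronson Gaussian bound, invoke Lemma~\ref{lem:gauss} to reduce to a Newtonian-type kernel with Gaussian tail, and use O'Neil's Lorentz--H\"older inequality $\|fg\|_{L^1}\lesssim\|f\|_{L^{n/2,1}}\|g\|_{L^{n/(n-2),\infty}}$ together with $|x-\cdot|^{2-n}\in L^{n/(n-2),\infty}$ on the near-field. The organizational difference is in the far-field. You decompose into dyadic annuli $A_j$, apply Lorentz--H\"older on each annulus, and then cover $A_j$ by $\sim 2^{jn}$ balls of radius $d$, invoking subadditivity of the $L^{n/2,1}$ norm (which is legitimate since $n/2>1$ makes the space normable) to pass to the supremum over $d$-balls; the resulting polynomial factor $2^{jn}$ is then killed by $e^{-c\,2^{2j}}$. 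The paper instead covers $\mathbb{R}^n$ from the outset by balls $B_i$ of radius $d$ with bounded overlap; on the $\sim\ell^{n-1}$ balls at distance $\sim\ell d$ from $x$ it applies the \emph{ordinary} H\"older inequality with exponents $n/2$ and $n/(n-2)$, which already yields $\|f\|_{L^{n/2}(B_i)}\lesssim\|f\|_{L^{n/2,1}(B_i)}$ on a single $d$-ball and thus sidesteps entirely the subadditivity step you flagged as the main obstacle. Both routes close; the paper's far-field bookkeeping is a bit shorter, while your Riesz-potential remark (the endpoint mapping $I_2:L^{n/2,1}\to L^\infty$) gives the cleanest conceptual explanation for why $L^{n/2,1}$ is the right space.
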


\begin{proof} We fix $x \in \mathbb{R}^n$ and let $B_1 = B(x,d)$. We choose countably many balls $B_i=B(x_i,d)$, $i \ge 2$, in such a way that every point in $\mathbb{R}^n$ is contained in at most $N$ of these balls, where $N$ depends only on the dimension $n$.
Then
$$ \int_{\mathbb{R}^n}{ f(y) \int_{0}^{d^2}{p_s(x,y) ds} \, dy} \leq \sum_{i=1}^{\infty} \int_{B_i}{ f(y) \int_{0}^{d^2}{p_s(x,y) ds} \, dy }.$$
By Lemma \ref{lem:gauss},
$$ \int_{B_i} f(y) \int_{0}^{d^2}{p_s(x,y) ds} \, dy \lesssim_{c_1,c_2}  \int_{B_i} \frac{f(y) }{|x-y|^{n-2}}  q \left( \frac{|x-y|^2}{c_2 d^2}\right) \exp\left(-\frac{|x-y|^2}{c_2 d^2}\right) dy$$
We note that there is one term that becomes singular on the diagonal $x=y$ while the other terms all exhibit decay.
The refined H\"older inequality due to O'Neil \cite{oneil},
 $$ \| fg \|_{L^1(\mathbb{R}^n)} \lesssim_n \|f\|_{L^{\frac{n}{2}, 1}(\mathbb{R}^n)} \| g\|_{L^{\frac{n}{n-2}, \infty}(\mathbb{R}^n)},$$
together with the fact that 
$$ \frac{1}{|x-y|^{n-2}} \in L^{\frac{n}{n-2},\infty}(\mathbb{R}^n,dy),$$
implies that
$$  \int_{B_i}{ \frac{ f(y) }{|x-y|^{n-2}} dy} \lesssim  \| f \|_{L^{\frac{n}{2}, 1}(B_i)}$$
for each $i \ge 1$. We apply the above estimate to each ball $B_i$ that is at most distance $d/2$ away from $x$. The number of such $B_i$'s can be bounded in terms on $n$ only. 

It  remains to consider those balls $B_i$ whose distance to $x$ is at least $d/2$. A simple counting estimate shows that
we have roughly $\sim \ell^{n-1}$ balls at distance $\sim \ell \cdot d$ from $x$. If $B_i$ is such a ball then
$$  \frac{1 }{|x-y|^{n-2}}  q\left( \frac{|x-y|^2}{c_2 d^2} \right)\exp\left(-\frac{|x-y|^2}{c_2 d^2}\right) \lesssim  \frac{ \ell^{2 \cdot \deg q} }{d^{n-2}\ell^{n-2}}   \exp\left( - \frac{ \ell^2}{c_2}\right).$$
Using this asymptotic, we will bound the integral
$$ I =   \int_{B_i} \frac{f(y) }{|x-y|^{n-2}}   q\left( \frac{|x-y|^2}{c_2 d^2} \right) \exp\left(-\frac{|x-y|^2}{c_2 d^2}\right) dy.$$
Applying the usual H\"older inequality with $p= n/2$ and $p' = n/(n-2)$, we get
$$ I \lesssim  \ell^{2 \cdot \deg q -n + 2}  \exp\left( - \frac{ \ell^2}{c_2}\right) \sup_{|B| \leq d^n} \| f\|_{L^{\frac{n}{2}}(B)}.$$
Altogether, we obtain
\begin{align*}
 \sup_{x \in \mathbb{R}^n} \int_{\mathbb{R}^n}{ f(y) \int_{0}^{d^2}{p_s(x,y) ds} \, dy} &\lesssim  \sup_{|B| \leq d^n} \| f\|_{L^{\frac{n}{2},1}(B)}  \\
&+   \left( \sum_{\ell=1}^{\infty}  \ell^{2 \cdot \deg q -n + 2}   \ell^{n-1}  \exp\left( - \frac{ \ell^2}{c_2}\right) \right)  \sup_{|B| \leq  d^n} \| f\|_{L^{\frac{n}{2}}(B)}\\
&\lesssim   \sup_{|B| \leq d^n} \| f\|_{L^{\frac{n}{2},1}(B)}.
\end{align*}
\end{proof}

\subsection{Proof of Theorem \ref{thm:1}.}
Given a point $x_0$ at which the solution attains its maximum, estimate \eqref{eq:fund ineq} together with the Cauchy-Schwarz inequality imply that
\begin{align*}
 1 \leq  \E_{x_0}\left(   1_{\left\{\tau > t\right\}}  \exp \left( \int_0^t V^+(X_s) ds \right) \right) \leq \mathbb{P}\left( \tau > t \right)^{1/2} \left(\mathbb{E}_{x_0} \exp \left( \int_0^t 2V^+(X_s) ds \right)\right)^{1/2}
\end{align*}
for all $t > 0$.
We choose $t = T(x_0)$ to be the median exit time $T(x_0)$ that we introduced in Definition \ref{def:median exit time}. Then
$$ \mathbb{P}_{x_0}\left( \tau > T(x_0) \right)^{1/2} \le \frac{1}{\sqrt{2}},$$
and therefore
$$   \mathbb{E}_{x_0} \left[ \exp \left( \int_0^{T(x_0)} 2V^+(X_s) ds \right) \right] \geq 2.$$
Khasminskii's Lemma, discussed in Section \ref{sec:drei}, now implies
$$  \sup_{x \in \mathbb{R}^n}{ \E_{x} \left[ \int_0^{T(x_0)}{ 2V^+(X_s)} ds \right] } \geq \frac{1}{2}.$$
Finally, by Lemma \ref{lem:est},
$$ \frac{1}{2} \leq \sup_{x \in \mathbb{R}^n} \E_{x} \left[ \int_0^{T(x_0)}{2V^+(X_s)} ds \right]   \lesssim_{n, \lambda, \Lambda}   \sup_{|B| \leq  T(x_0)^{n/2}} \left\| V^+\right\|_{L^{\frac{n}{2},1}(B)}.$$
Recall that $V \equiv 0 $ outside $\Omega$. The proof of Theorem \ref{thm:1} is now complete.

\section{Proofs of Theorem \ref{thm:2} and \ref{thm:Lieb-type}}

\subsection{Proof of Theorem \ref{thm:2}.}  The proof follows the same line of thought as the proof of Theorem 1, the only difference is one technical lemma which we provide here.
\begin{lemma} \label{lem:final} Let $d > 0$ and assume $g: \mathbb{R}^2 \rightarrow (0,\infty)$ satisfies
$$ g(x) \lesssim \left(1+ \log{\frac{d^2}{|x|^2}} 1_{\{|x| \leq d\}} \right) \exp\left(-\frac{|x|^2}{ d^2} \right).$$
Then
$$ \sup_{x \in \mathbb{R}^2} \int_{\mathbb{R}^2}{f(x-y) g(y) dy} \lesssim \sup_{x \in \mathbb{R}^2}  \int_{\{|y| \leq d\}}{ |f(x-y)|  \log{\left(\frac{d^2}{|y|^2}\right)} dy}.$$
\end{lemma}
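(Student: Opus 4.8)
The plan is to fix $x \in \R^2$, bound $\int_{\R^2} |f(x-y)|\,g(y)\,dy$ by a constant multiple of
$$ M := \sup_{z \in \R^2} \int_{\{|y| \le d\}} |f(z-y)|\,\log\!\left(\frac{d^2}{|y|^2}\right) dy, $$
and then take the supremum over $x$, using $\int f(x-y)g(y)\,dy \le \int |f(x-y)|\,g(y)\,dy$; there is nothing to prove if $M=\infty$, so assume $M<\infty$. I would split $\R^2$ into the near region $\{|y| \le d/2\}$ and the far region $\{|y| > d/2\}$. On the near region $\log(d^2/|y|^2) \ge \log 4 > 0$, so the hypothesis on $g$ gives $g(y) \lesssim 1 + \log(d^2/|y|^2) \lesssim \log(d^2/|y|^2)$, and hence $\int_{\{|y| \le d/2\}} |f(x-y)|\,g(y)\,dy \lesssim M$ at once.

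The far region carries the content of the lemma. Observe first that on all of $\{|y| > d/2\}$ the hypothesis yields $g(y) \lesssim \exp(-|y|^2/d^2)$: for $d/2 < |y| \le d$ the prefactor $1 + \log(d^2/|y|^2)$ is bounded above by $1 + \log 4$, and for $|y| > d$ the logarithmic term is absent. I would then decompose $\{|y| > d/2\}$ into the annuli $A_j = \{\, jd/2 \le |y| < (j+1)d/2 \,\}$ for $j \ge 1$, and cover each $A_j$ by $N_j \lesssim j$ balls $B(z_{j,k}, d/4)$ with centers $z_{j,k} \in A_j$ (a routine covering count, since $|A_j| \sim j\,d^2$). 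On such a ball one has $|y| \ge jd/2 - d/4 = (2j-1)d/4$, hence $\exp(-|y|^2/d^2) \le \exp(-(2j-1)^2/16) \le \exp(-j^2/16)$, using $(2j-1)^2 \ge j^2$ for $j \ge 1$.

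The crucial point is the estimate of $\int_{B(z_{j,k},d/4)} |f(x-y)|\,dy$ on a single ball. Substituting $y = z_{j,k}+w$ and writing $z := x - z_{j,k}$,
$$ \int_{B(z_{j,k},d/4)} |f(x-y)|\,dy = \int_{\{|w| \le d/4\}} |f(z-w)|\,dw \le \frac{1}{\log 16}\int_{\{|w| \le d\}} |f(z-w)|\,\log\!\left(\frac{d^2}{|w|^2}\right) dw \le \frac{M}{\log 16}, $$
using that $\log(d^2/|w|^2) \ge \log 16 > 0$ for $|w| \le d/4$. This is where the supremum over the base point in the definition of $M$ is indispensable: the weight $\log(d^2/|y|^2)$ degenerates to $0$ as $|y| \to d$, so one cannot compare pointwise against it on a ball abutting $\partial B(0,d)$; instead one translates any such ball back to the origin by absorbing its center into the argument of $f$, which in turn forces the covering radius to be a fixed fraction of $d$.

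Combining the last two displays and summing,
$$ \int_{\{|y| > d/2\}} |f(x-y)|\,g(y)\,dy \;\lesssim\; \sum_{j \ge 1} N_j\, e^{-j^2/16}\, M \;\lesssim\; \Big(\sum_{j \ge 1} j\, e^{-j^2/16}\Big) M \;\lesssim\; M, $$
and adding the near region gives $\int_{\R^2} |f(x-y)|\,g(y)\,dy \lesssim M$ uniformly in $x$, which is the claim. I expect the re-centering step to be the only nonroutine ingredient; the rest is bookkeeping with Gaussian tails and covering numbers. This is precisely the $n=2$ analogue of Lemma~\ref{lem:est}, and together with the $n=2$ case of Lemma~\ref{lem:gauss} it lets the proof of Theorem~\ref{thm:2} be run along the same lines as that of Theorem~\ref{thm:1}.
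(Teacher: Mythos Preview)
Your proof is correct and follows essentially the same approach as the paper's: both split off the near-origin logarithmic piece, control the Gaussian tail via a covering, and rely on the same re-centering trick (translating a small ball to the origin so that $\log(d^2/|w|^2)$ is bounded below) to bound $\int_B |f|$ by $M$. Your annular decomposition with balls of radius $d/4$ is a slightly cleaner single-level covering than the paper's two-level scheme (disks $D_i$ of radius $d$, then a finite set $A$ of nearby disks), but the content is the same.
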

\begin{proof} We may assume that the supremum is assumed in the origin (after possibly translating the function). The desired inequality then reads
$$  \int_{\mathbb{R}^2}{f(y) g(y) dy} \lesssim \sup_{x \in \mathbb{R}^2}  \int_{\{|y| \leq d\}}{ |f(x-y)|  \log{\left(\frac{d^2}{|y|^2}\right)} dy}.$$
As in the proof of Lemma 6, we cover $\mathbb{R}^2$ by disks $D_i=B(x_i,d)$, $i \ge 1$, of radius $d$ in such a way that $x_1=0$, each
point in $\mathbb{R}^2$ is at most distance $d/10$ away from some $x_i$, and each point in $\mathbb{R}^2$ is contained in at most $c$ of these disks
(a simple lattice construction shows this to be possible; one could ask for constructions that minimize $c$ and questions of these types have been studied independently, see F\"uredi \& Loeb \cite{furedi} - this is, of course, not required here). We can now bound
\begin{align*}
 \int_{\mathbb{R}^2}{f(y) g(y) dy}   &\lesssim   \int_{\mathbb{R}^2} f(y)  \left(1+ \log{\frac{d^2}{|y|^2}} 1_{\{|y| \leq d\}} \right)   \exp\left(-\frac{|y|^2}{ d^2} \right) dy  \\
&\lesssim \int_{\{|y| \leq d\}}{|f(y)|  \log{\left(\frac{d^2}{|y|^2}\right)} dy} + \int_{\mathbb{R}^2}{|f(y)|   \exp\left(-\frac{|y|^2}{ d^2} \right) dy}.
\end{align*}
The first term is easy to bound since, trivially,
$$ \int_{\{|y| \leq d\}}{|f(y)|  \log{\left(\frac{d^2}{|y|^2}\right)} dy} \leq \sup_{x \in \mathbb{R}^2} \int_{\{|y| \leq d\}}{|f(x-y)|  \log{\left(\frac{d^2}{|y|^2}\right)} dy}.$$
We now deal with the second term: Clearly,
\begin{align*}
\int_{\mathbb{R}^2}{|f(y)|   \exp\left(-\frac{|y|^2}{ d^2} \right) dy} \leq \sum_{i=1}^{\infty} \int_{D_i}{|f(y)|   \exp\left(-\frac{|y|^2}{ d^2} \right) dy}.
\end{align*}
We may assume that the disks $i$ are ordered in increasing distance from the origin so that 
$$ d(D_i, 0) = \inf_{x \in D_i}|x| \geq d \sqrt{c_3 i}$$
for some $c_3>0$ and for sufficiently large $i$. This implies
$$ \int_{D_i}{|f(y)|   \exp\left(-\frac{|y|^2}{ d^2} \right) dy} \lesssim  \exp\left(-c_3 \cdot i \right)    \int_{D_i}{|f(y)|  dy}.$$

This leads to summable decay, it now suffices to show the uniform estimate
$$ \sup_{i \in \mathbb{N}}  \int_{D_i}{|f(y)|  dy} \lesssim \sup_{x \in \mathbb{R}^2} \int_{\{|y| \leq d\}}{|f(x-y)|  \log{\left(\frac{d^2}{|y|^2}\right)} dy}.$$
This seems a bit tricky at first (because the logarithm vanishes at $|y| = d$) but is easily compensated by the fact that for every point in $x \in \mathbb{R}^2$, there
exists a disk $D_j$ whose center $x_j$ is at most distance $d/10$ away. We fix an arbitrary $i$ and let 
$$A = \left\{j \in \mathbb{N}: |x_j - x_i| \leq d \right\}.$$
This set is finite and its cardinality only depends on $c$ (this could be made explicit by fixing a sufficiently fine lattice but this is not required). We now claim that
for all $y \in D_i$
$$ |f(y)| \leq \sum_{a \in A}{|f( x_j-(x_j-y) )|  \log{\left(\frac{d^2}{|x_j - y|^2}\right)}}.$$
This is easy to see: for every $y$ there exists $x_j$ with $|x_j - y| \leq d/10$ which ensures that at least one logarithmic factor is bigger than $\log{(100)} \geq 1$. Therefore, with a change of variables,
\begin{align*}
 \int_{D_i}{ |f(y)|  dy} &\leq \sum_{j \in A}{ \int_{D_a}{|f(z)|   \log{\left(\frac{d^2}{|z-x_j|^2}\right)} dz}}\\
&= \sum_{j \in A}{ \int_{\left\{|z| \leq d\right\}}{|f(z+x_j)|   \log{\left(\frac{d^2}{|z|^2}\right)} dz}}\\
&\leq (\#A) \sup_{x \in \mathbb{R}^2} \int_{\left\{|z| \leq d\right\}}{|f(x - z)|   \log{\left(\frac{d^2}{|z|^2}\right)} dz} \\
&\lesssim_{c}  \sup_{x \in \mathbb{R}^2} \int_{\left\{|z| \leq d\right\}}{|f(x - z)|   \log{\left(\frac{d^2}{|z|^2}\right)} dz}.
\end{align*}
\end{proof}

\begin{proof}[Proof of Theorem 2.] Arguing exactly as in the proof of Theorem 1, we arrive at
$$ \frac{1}{2} \leq \sup_{x \in \R^2} \E_{x} \left[ \int_0^{T(x_0)}{2V^+(X_s)} ds \right].$$
Interchanging the order of integration and using Lemma 5 yields
$$  \E_{x} \left[ \int_0^{T(x_0)}{2V^+(X_s)} ds \right] \lesssim \int_{\mathbb{R}^2}{  V^{+}(y-x)\left(1 + \max\left\{0, \log{\left( \frac{c_2 T(x_0)}{|y|^2} \right)} \right\} \right) \exp\left(-\frac{|y|^2}{c_2 T(x_0)}\right) dy}.$$
The argument concludes by using Lemma 7 to bound the supremum via
$$ \sup_{x \in \R^2} \E_{x} \left[ \int_0^{T(x_0)}{2V^+(X_s)} ds \right] \lesssim \sup_{x \in \mathbb{R}^2}  \int_{\{|y| \leq (c_2 T(x_0))^{1/2}\}}{ |V^+(x-y)|  \log{\left(\frac{c_2 T(x_0)}{|y|^2}\right)} dy},$$
which is the desired statement.
\end{proof}

\subsection{Proof of Theorem 3.}

\begin{proof}  Let $\eta \in (0,1)$. Let $x_0 \in \Omega$ be a point in which $|u|$ assumes its maximum. Suppose
$$ |B(x_0,T_{\eta}(x_0)^{1/2}) \cap \Omega| < \frac{1-2\eta}{1-\eta} | B(x_0,T_{\eta}(x_0)^{1/2})|.$$
By Definition \ref{def:median exit time} of the median exit time $T_{\eta}(x_0)$, we have
\begin{align*}
\P_{x_0}( \tau > T_{\eta}(x_0)) < 1 - \eta.
\end{align*}
This, estimate \eqref{eq:fund ineq}, and the Cauchy-Schwarz inequality imply that
\begin{align*} 
1 &\leq  \E_{x_0}\left[ 1_{\left\{\tau > T_{\eta}(x_0) \right\}}  \exp \left( \int_0^{T_{\eta}(x_0)} V^+(X_s) ds \right) \right] \\
&\leq \mathbb{P}\left( \tau > T_{\eta}(x_0) \right)^{1/2} \left( \mathbb{E}_{x_0} \left[ \exp \left( \int_0^{T_{\eta}(x_0)} 2V^+(X_s) ds \right)\right] \right)^{1/2} \\
&\leq \left(1 - \eta \right)^{1/2}  \left( \mathbb{E}_{x_0} \left[ \exp \left( \int_0^{T_{\eta}(x_0)} 2V^+(X_s) ds \right) \right] \right)^{1/2}.
\end{align*}
Thus,
$$  \E_{x_0} \left[ \exp \left( \int_{0}^{T_{\eta}(x_0)}{2V^{+}(X_s) ds} \right) \right] \geq \frac{1}{1-\eta}.$$
By Khasminskii's Lemma, 
$$ \sup_{x \in \R^n}   \E_{x}    \int_{0}^{T_{\eta}(x_0)}{2V^{+}(X_s) ds} \ge \eta.$$
Lemma \ref{lem:est} then implies that there is a constant $C=C(n,\lambda,\Lambda)$ such that
$$ \eta \leq \sup_{x \in \R^n}   \E_{x}  \int_{0}^{T_{\eta}(x_0)}{2V^+(X_s) ds}  \le C \sup_{|B| \leq  T_{\eta}(x_0)^{n/2}} \left\| V\right\|_{L^{\frac{n}{2},1}(B)},$$
where the supremum ranges over all balls $B$ of volume at most $T_{\eta}(x_0)^{n/2}$. If, however, $\left\| V^+ \right\|_{L^{\frac{n}{2},1}(B)} < C^{-1} \eta$ for all balls $B$ of volume at most $T_{\eta}(x_0)^{n/2}$, then we have a contradiction, and therefore
$$ |B(x_0,T_{\eta}(x_0)^{1/2}) \cap \Omega| \geq \frac{1-2\eta}{1-\eta} | B(x_0,T_{\eta}(x_0)^{1/2})|.$$
\end{proof}

\bibliographystyle{siam}
\bibliography{bibliography}

\end{document}